\def\g{\gamma}
\def\eps{{\varepsilon}}
\def\N{\mathbb{N}}
\def\R{\mathbb{R}}
\def\A{\mathcal{A}}
\def\D{\mathcal{D}}
\def\E{\mathcal{E}}
\def\H{\mathcal{H}}
\def\LL{\mathcal{L}}
\def\V{\mathcal{V}}
\def\<{\langle}
\def\>{\rangle}
\newcommand{\be}{\begin{equation}}
\newcommand{\ee}{\end{equation}}
\newcommand{\bib}[4]{\bibitem{#1}{\sc#2: }{\it#3. }{#4.}}
\numberwithin{equation}{section}
\theoremstyle{plain}
\newtheorem{teo}{Theorem}[section]
\newtheorem{lemma}[teo]{Lemma}
\newtheorem{cor}[teo]{Corollary}
\newtheorem{prop}[teo]{Proposition}
\theoremstyle{remark}
\newtheorem{oss}[teo]{Remark}
\newtheorem{exam}[teo]{Example}
\newenvironment{ack}{{\bf Acknowledgements.}}
\title{Shape Optimization Problems for Metric Graphs}
\author{Giuseppe Buttazzo, Berardo Ruffini and Bozhidar Velichkov}
\begin{document}


\maketitle

\begin{abstract}
We consider the shape optimization problem
$$\min\big\{\E(\Gamma)\ :\ \Gamma\in\A,\ \H^1(\Gamma)=l\ \big\},$$
where $\H^1$ is the one-dimensional Hausdorff measure and $\A$ is an admissible class of one-dimensional sets connecting some prescribed set of points $\D=\{D_1,\dots,D_k\}\subset\R^d$. The cost functional $\E(\Gamma)$ is the Dirichlet energy of $\Gamma$ defined through the Sobolev functions on $\Gamma$ vanishing on the points $D_i$. We analyze the existence of a solution in both the families of connected sets and of metric graphs. At the end, several explicit examples are discussed.
\end{abstract}

\textbf{Keywords:} shape optimization, rectifiable sets, metric graphs, quantum graphs, Dirichlet energy

\textbf{2010 Mathematics Subject Classification:} 49R05, 49Q20, 49J45, 81Q35.

\section{Introduction}\label{sintro}

In the present paper we consider the problem of finding optimal graphs in a given admissible class consisting of all connected graphs of prescribed total length and containing a prescribed set of points. The minimization criterion we consider along all the paper is the Dirichlet energy, though in the last section we discuss the possibility of extending our results to other criteria, like the first Dirichlet eigenvalue or similar spectral functionals.

A graph $C$ in $\R^d$ is simply a closed connected subset of $\R^d$ with finite 1-dimensional Hausdorff measure $\H^1(C)$. Since such sets are rectifiable (see for instance \cite[Section 4.4]{at}) we can define all the variational tools that are usually defined in the Euclidean setting:

\begin{itemize}
\item Dirichlet integral $\int_C\frac12 |u'|^2\,d\H^1$;

\item Sobolev spaces
\begin{align}
&H^1(C)=\Big\{u\in L^2(C)\ :\ \int_C|u'|^2\,d\H^1<+\infty\Big\},\nonumber\\
&H^1_0(C;\D)=\Big\{u\in H^1(C)\ :\ u=0\hbox{ on }\D\Big\};\nonumber
\end{align}

\item Energy
$$\E(C;\D)=\inf\Big\{\int_C\Big(\frac12|u'|^2-u\Big)\,d\H^1\ :\ u\in H^1_0(C,\D)\Big\}.$$
\end{itemize}

In particular, for a fixed set $\D$ consisting of $N$ points, $\D=\{D_1,\dots,D_N\}$, we consider the shape optimization problem
\be\label{shopt1}
\min\big\{\E(C;\D)\ :\ \H^1(C)=l,\ \D\subset C\big\},
\ee
where the total length $l$ is fixed. Notice that in the problem above the unknown is the graph $C$ and no a priori constraints on its topology are imposed.

In spite of the fact that the optimization problem \eqref{shopt1} looks very natural, we show that in general an optimal graph may not exist (see Example \ref{example1}); this leads us to consider a larger admissible class consisting of the so-called {\it metric graphs}, for which the embedding into $\R^d$ is not required. The precise definition of a metric graph is given in Section \ref{s3}; roughly speaking they are metric spaces induced by combinatorial graphs with weighted edges.

Our main result is an existence theorem for optimal metric graphs, where the cost functional is the extension of the energy functional defined above. In Section \ref{s5} we show some explicit examples of optimal metric graphs. The last section contains some discussions on possible extensions of our result to other similar problems and on some open questions.

For a review on metric graphs and their application to Physics (where they are commonly called {\it quantum graphs}) we refer to \cite{gnu06}, \cite{Ku}.

\section{Sobolev space and Dirichlet Energy of a rectifiable set}\label{s1}

Let $C\subset\R^d$ be a closed connected set of finite length, i.e. $\H^1(C)<\infty$, where $\H^1$ denotes the one-dimensional Hausdorff measure. On the set $C$ we consider the metric
$$d(x,y)=\inf\left\{\int_0^1|\dot\gamma(t)|\,dt\ :\ \gamma:[0,1]\to\R^d\hbox{ Lipschitz, }\g([0,1])\subset C,\ \g(0)=x,\ \g(1)=y\right\},$$
which is finite since, by the First Rectifiability Theorem (see \cite[Theorem 4.4.1]{at}), there is at least one rectifiable curve in $C$ connecting $x$ to $y$. For any function $u:C\to\R$, Lipschitz with respect to the distance $d$ (we also use the term $d$-Lipschitz), we define the norm
$$\|u\|^2_{H^1(C)}=\int_C |u(x)|^2d\H^1(x)+\int_C |u'|(x)^2d\H^1(x),$$
where
$$|u'|(x)=\limsup_{y\to x}\frac{|u(y)-u(x)|}{d(x,y)}.$$
The Sobolev space $H^1(C)$ is the closure of the $d$-Lipschitz functions on $C$ with respect to the norm $\|\cdot\|_{H^1(C)}$. 

\begin{oss}\label{cpt}
The inclusion $H^1(C)\subset C_d(C)$ is compact, where $C_d(C)$ indicates the space of real-valued continuous functions on $C$, with respect to the metric $d$. In fact, for each $x,y\in C$, there is a rectifiable curve $\gamma:[0,d(x,y)]\to C$ connecting $x$ to $y$, which we may assume arc-length parametrized. Thus, for any $u\in H^1(C)$, we have that
\begin{align}
|u(x)-u(y)|&\le\int_0^{d(x,y)}\left|\frac{d}{dt}u(\gamma(t))\right|\,dt\nonumber\\
&\le d(x,y)^{1/2}\left(\int_0^{d(x,y)}\left|\frac{d}{dt}u(\gamma(t))\right|^2dt\right)^{1/2}\nonumber\\
&\le d(x,y)^{1/2}\|u'\|_{L^2(C)},\nonumber
\end{align}
and so, $u$ is $1/2$-H\"older continuous. On the other hand, for any $x\in C$, we have that
$$\int_C u(y)d\H^1(y)\ge \int_C\left( u(x)-d(x,y)^{1/2}\|u'\|_{L^2(C)}\right)\,d\H^1(y)\ge lu(x)-l^{3/2}\|u'\|_{L^2(C)},$$
where $l=\H^1(C)$. Thus, we obtain the $L^\infty$ bound
$$\|u\|_{L^\infty}\le l^{-1/2}\|u\|_{L^2(C)}+l^{1/2}\|u'\|_{L^2(C)}\le(l^{-1/2}+l^{1/2})\|u\|_{H^1(C)}.$$
and so, by the Ascoli-Arzel\'a Theorem, we have that the inclusion is compact.
\end{oss}

\begin{oss}\label{pncr}
By the same argument as in Remark \ref{cpt} above, we have that for any $u\in H^1(C)$, the $(1,2)$-Poincar\'e inequality holds, i.e.
\be\label{pncr1}
\int_C\left|u(x)-\frac1l\int_C u\,d\H^1\right|d\H^1(x)\le l^{3/2}\left(\int_C|u'|^2d\H^1\right)^{1/2}.
\ee
Moreover, if $u\in H^1(C)$ is such that $u(x)=0$ for some point $x\in C$, then we have the Poincar\'e inequality:
\be\label{pncr}
\|u\|_{L^2(C)}\le l^{1/2}\|u\|_{L^{\infty}(C)}\le l\|u'\|_{L^2(C)}.
\ee
\end{oss}

Since $C$ is supposed connected, by the Second Rectifiability Theorem (see \cite[Theorem 4.4.8]{at}) there exists a countable family of injective arc-length parametrized Lipschitz curves $\g_i:[0,l_i]\to C$, $i\in\N$ and an $\H^1$-negligible set $N\subset C$ such that
$$C=N\cup\left(\bigcup_i Im(\g_i)\right),$$
where $Im(\g_i)=\g_i([0,l_i])$. By the chain rule (see Lemma \ref{lchain} below) we have
$$\Big|\frac{d}{dt}u(\g_i(t))\Big|=|u'|(\g_i(t)),\qquad\forall i\in\N$$
and so, we obtain for the norm of $u\in H^1(C)$:
\be\label{norm}
\|u\|^2_{H^1(C)}=\int_C |u(x)|^2d\H^1(x)+\sum_i\int_0^{l_i}\left|\frac{d}{dt}u(\gamma_i(t))\right|^2dt.
\ee
Moreover, we have the inclusion 
\be\label{refl}
H^1(C)\subset \oplus_{i\in\N} H^1([0,l_i]),
\ee
which gives the reflexivity of $H^1(C)$ and the lower semicontinuity of the $H^1(C)$ norm, with respect to the strong convergence in $L^2(C)$.

\begin{lemma}\label{lchain}
Let $u\in H^1(C)$ and let $\g:[0,l]\to\R^d$ be an arc-length parametrized Lipschitz curve with $\g([0,l])\subset C$. Then we have
\be\label{chain}
\left|\frac{d}{dt}u(\gamma(t))\right|=|u'|(\g(t)),
\qquad\hbox{for $\LL^1$-a.e. $t\in[0,l]$}.
\ee
\end{lemma}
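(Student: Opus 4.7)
First reduce to the case $u$ is $d$-Lipschitz: then $v(t):=u(\gamma(t))$ is Lipschitz on $[0,l]$, because arc-length parametrization gives $d(\gamma(s),\gamma(t))\le|s-t|$, and hence $v'$ exists $\LL^1$-a.e. The general case $u\in H^1(C)$ is handled by approximating $u$ in the $H^1$ norm by $d$-Lipschitz functions and passing to the limit in \eqref{chain}.

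The inequality $|v'(t)|\le|u'|(\gamma(t))$ is elementary: at any differentiability point of $v$ factor
\[
\frac{|v(t+h)-v(t)|}{|h|}=\frac{|u(\gamma(t+h))-u(\gamma(t))|}{d(\gamma(t+h),\gamma(t))}\cdot\frac{d(\gamma(t+h),\gamma(t))}{|h|},
\]
take $\limsup_{h\to 0}$, and observe that the second factor is at most $1$ by arc-length parametrization, while the first has $\limsup$ bounded by $|u'|(\gamma(t))$ directly from the definition of $|u'|$.

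The converse $|u'|(\gamma(t))\le|v'(t)|$ is the main obstacle and relies on the $1$-rectifiability of $C$. Two standard facts for $1$-rectifiable sets are needed: at $\H^1$-a.e.\ $x\in C$ there exists a unique approximate tangent line to $C$, and the intrinsic distance is asymptotically Euclidean, namely $d(y,z)/|y-z|\to 1$ as $y,z\to x$ in $C$. By the area formula applied to the Lipschitz curve $\gamma$, the preimage of any $\H^1$-negligible set is $\LL^1$-negligible, so at $\LL^1$-a.e.\ $t_0$ one may assume that $\gamma(t_0)$ enjoys both properties, that $v'(t_0)$ and $\dot\gamma(t_0)$ exist with $|\dot\gamma(t_0)|=1$, and consequently that the tangent line to $C$ at $\gamma(t_0)$ is spanned by $\dot\gamma(t_0)$. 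For any sequence $y_n\to\gamma(t_0)$ in $C\setminus\{\gamma(t_0)\}$ the tangent structure produces parameters $s_n\to t_0$ with $|y_n-\gamma(s_n)|=o(|y_n-\gamma(t_0)|)$ and $|s_n-t_0|=d(\gamma(t_0),y_n)(1+o(1))$. Splitting
\[
u(y_n)-u(\gamma(t_0))=\bigl[u(y_n)-u(\gamma(s_n))\bigr]+\bigl[u(\gamma(s_n))-u(\gamma(t_0))\bigr],
\]
the first bracket is $o(d(\gamma(t_0),y_n))$ by Lipschitz continuity combined with the metric equivalence, while the second equals $v'(t_0)(s_n-t_0)+o(s_n-t_0)$. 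Dividing by $d(\gamma(t_0),y_n)$ and passing to the $\limsup$ over $y_n\to\gamma(t_0)$ yields $|u'|(\gamma(t_0))\le|v'(t_0)|$.

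The delicate technical point is isolating such a generic $t_0$, which requires excluding an $\LL^1$-null set of parameters where $\gamma$ is not differentiable, where $\gamma(t)$ lies in the $\H^1$-negligible skeleton $N$ of the rectifiable decomposition of $C$, or where $C$ lacks a one-dimensional approximate tangent at $\gamma(t)$; both the tangent and the asymptotic Euclidean behavior of $d$ used above are standard consequences of the rectifiability results from \cite[Sect.~4.4]{at} invoked elsewhere in this section.
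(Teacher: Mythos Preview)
Your easy inequality $|v'(t)|\le |u'|(\gamma(t))$ is correct and coincides with the paper's argument. The reduction to Lipschitz $u$ is also in the paper, though neither you nor the paper spells out the limiting step carefully.

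The hard inequality, however, has a real gap. The two ``standard facts'' you invoke do not deliver what you need. First, the statement that $d(y,z)/|y-z|\to 1$ as $y,z\to x$ in $C$ is \emph{false} at a generic density-one point. Take $C=([-1,1]\times\{0\})\cup\bigcup_k\big(\{2^{-k}\}\times[0,4^{-k}]\big)$: the origin has density $1$ and tangent the $x$-axis, yet with $y_k=(2^{-k},4^{-k})$ and $z_k=(2^{-k}+4^{-k},0)$ one gets $d(y_k,z_k)/|y_k-z_k|=\sqrt2$ for all $k$. So ``metric equivalence'' cannot be used to pass from $|y_n-\gamma(s_n)|=o(r_n)$ to $d(y_n,\gamma(s_n))=o(r_n)$, which is what you actually need to bound the first bracket. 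Second, the approximate tangent is a measure-theoretic statement: it says $\H^1$-most of $C\cap B_r(x)$ lies in a narrow cone, not that every individual $y_n$ does, so it does not by itself ``produce'' the parameters $s_n$.

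The paper's proof takes a different and more direct route: it works at points where $\gamma(t)$ has density one, connects $x=\gamma(t_0)$ to $y_n$ by a geodesic $\gamma_n$ of length $r_n=d(x,y_n)$, and compares $\gamma_n$ with $\gamma$ through the coincidence set $S_n=\{t:\gamma_n(t)=\gamma(t)\}$. Since both $\gamma_n([0,r_n])$ and $\gamma$ near $t_0$ sit inside $C\cap B_{r_n}(x)$, the density-one condition forces $r_n-|S_n|\le \H^1(C\cap B_{r_n}(x))-2r_n=o(r_n)$; combined with the Lebesgue-point property of $|(u\circ\gamma)'|$ this gives $|u'|(x)\le |(u\circ\gamma)'(t_0)|$. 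Note that this geodesic/density argument is exactly what would rescue your approach as well: it yields $d(y_n,\gamma(s_n))=o(r_n)$ directly for the last exit point $\gamma(s_n)$ of $\gamma_n$ from the image of $\gamma$, without appealing to tangents or metric equivalence.
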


\begin{proof}
With no loss of generality we may assume that $u:C\to\R$ is a Lipschitz map with Lipschitz constant $Lip(u)$ with respect to the distance $d$ and that the curve $\gamma$ is injective. We prove that the chain rule \eqref{chain} holds in all the points $t\in[0,l]$ which are Lebesgue points for $\left|\frac{d}{dt}u(\g(t))\right|$ and such that the point $\gamma(t)$ has density one, i.e.
\begin{equation}\label{dens}
\lim_{r\to0}\frac{\H^1\big(C\cap B_r(\gamma(t))\big)}{2r}=1,
\end{equation}
(thus almost every points, see for instance \cite[Theorem I.10.2]{ma12}) where $B_r(x)$ indicates the ball of radius $r$ in $\R^d$. Since, $\H^1$-almost all points $x\in C$ have this property, we obtain the conclusion. Without loss of generality, we consider $t=0$. Let us first prove that $|u'|(\g(0))\ge \left|\frac{d}{dt}u(\g(0))\right|$. We have that 
$$|u'|(\gamma(0))\ge \limsup_{t\to0}\frac{|u(\gamma(t))-u(\gamma(0))|}{d(\gamma(t),\gamma(0))}=\left|\frac{d}{dt}u(\g(0))\right|,$$
since $\gamma$ is arc-length parametrized. On the other hand, we have
\begin{align}
|u'|(x)&=\limsup_{y\to x}\frac{|u(y)-u(x)|}{d(y,x)}\nonumber\\
&=\lim_{n\to\infty}\frac{|u(y_n)-u(x)|}{d(y_n,x)}\nonumber\\
&=\lim_{n\to\infty}\frac{|u(\gamma_n(r_n))-u(\gamma_n(0))|}{r_n}\nonumber\\
&\le\lim_{n\to\infty}\frac{1}{r_n}\int_0^{r_n}\left|\frac{d}{dt}u(\gamma_n(t))\right|\,dt\label{chain3}
\end{align}
where $y_n\in C$ is a sequence of points which realizes the $\limsup$ and $\gamma_n:[0,r_n]\to\R^d$ is a geodesic in $C$ connecting $x$ to $y_n$. Let $S_n=\{t:\g_n(t)=\g(t)\}\subset [0,r_n]$, then, we have
\begin{align}
\int_{0}^{r_n}\left|\frac{d}{dt}u(\g_n(t))\right|^2dt
&\le\int_{S_n}\left|\frac{d}{dt}u(\g(t))\right|^2dt+Lip(u)\,(r_n-|S_n|)\nonumber\\
&\le\int_{0}^{r_n}\left|\frac{d}{dt}u(\g(t))\right|^2dt+Lip(u)\,(\H^1(B_{r_n}(\g(0))\cap C)-2r_n),\label{chain1}
\end{align}
and so, since $\g(0)$ is of density $1$, we conclude applying this estimate to \eqref{chain3}. 
\end{proof}

Given a set of points $\D=\{D_1,\dots,D_k\}\subset\R^d$ we define the admissible class $\A(\D;l)$ as the family of all closed connected sets $C$ containing $\D$ and of length $\H^1(C)=l$. For any $C\in \A(\D;l)$ we consider the space of Sobolev functions which satisfy a Dirichlet condition at the points $D_i$:
$$H^1_0(C;\D)=\{u\in H^1(C): u(D_j)=0,\, j=1\dots,k\},$$
which is well-defined by Remark \ref{cpt}. For the points $D_i$ we use the term \emph{Dirichlet points}. The \emph{Dirichlet Energy} of the set $C$ with respect to $D_1,\dots,D_k$ is defined as
\be\label{energy1}
\E(C;\D)=\min\left\{J(u)\ :\ u\in H^1_0(C;\D)\right\},
\ee
where 
\be\label{energy2}
J(u)=\frac12\int_C|u'|(x)^2\,d\H^1(x)-\int_Cu(x)\,d\H^1(x).
\ee

\begin{oss}
For any $C\in\A(\D;l)$ there exists a unique minimizer of the functional $J:H^1_0(C;\D)\to\R$. In fact, by Remark \ref{cpt} we have that a minimizing sequence is bounded in $H^1$ and compact in $L^2$. The conclusion follows by the semicontinuity of the $L^2$ norm of the gradient, with respect to the strong $L^2$ convergence, which is an easy consequence of equation \eqref{norm}. The uniqueness follows by the strict convexity of the $L^2$ norm and the sub-additivity of the gradient $|u'|$. We call the minimizer of $J$ the \emph{energy function} of $C$ with Dirichlet conditions in $D_1,\dots,D_k$.
\end{oss}

\begin{oss}\label{coarea}
Let $u\in H^1(C)$ and $v:C\to\R$ be a positive Borel function. Applying the chain rule, as in \eqref{norm}, and the one dimensional co-area formula (see for instance \cite[Theorem 3.40]{amfupa}), we obtain a co-area formula for the functions $u\in H^1(C)$:
\begin{align}
\int_C v(x)|u'|(x)\,d\H^1(x)&=\sum_i\int_0^{l_i}\left|\frac{d}{dt}u(\gamma_i(t))\right|v(\gamma_i(t))\,dt\nonumber\\
&=\sum_i\int_0^{+\infty}\!\!\Big(\!\!\!\sum_{u\circ\g_i(t)=\tau}\!\!\!v\circ\gamma_i(t)\Big)\,d\tau\label{coa}\\
&=\int_0^{+\infty}\!\!\Big(\sum_{u(x)=\tau}\!\!\!v(x)\Big)\,d\tau.\nonumber
\end{align}
\end{oss}

\subsection{Optimization problem for the Dirichlet Energy on the class of connected sets}\label{s2}

We study the following shape optimization problem:
\be\label{minenergy1}
\min\left\{\E(C;\D)\,:\,C\in\A(\D;l)\right\},
\ee
where $\D=\{D_1,...,D_k\}$ is a given set of points in $\R^d$ and $l$ is a prescribed length. 

\begin{oss}\label{ps}
When $k=1$ problem $\eqref{minenergy1}$ reads as
\be\label{minenergy1p}
\E=\min\left\{\E(C;D)\ :\ \H^1(C)=l,\ D\in C\right\},
\ee
where $D\in\R^d$ and $l>0$. In this case the solution is a line of length $l$ starting from $D$ (see Figure \ref{f1ex0}). A proof of this fact, in a slightly different context, can be found in \cite{fr} and we report it here for the sake of completeness.

\begin{figure}[h]
\centerline{\includegraphics[width=8.0cm]{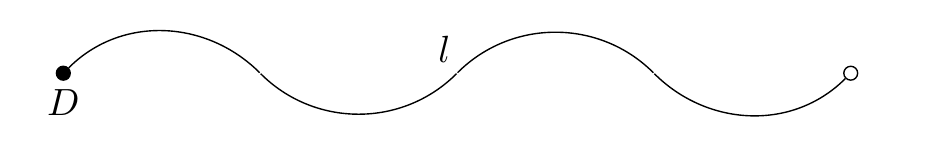}}
\caption{The optimal graph with only one Dirichlet point.}
\label{f1ex0}
\end{figure}

Let $C\in\A(D;l)$ be a generic connected set and let $w\in H^1_0(C;D)$ be its energy function, i.e. the minimizer of $J$ on $C$. Let $v:[0,l]\to\R$ be such that $\mu_w(\tau)=\mu_v(\tau)$, where $\mu_w$ and $\mu_v$ are the distribution function of $w$ and $v$ respectively, defined by
$$\mu_w(\tau)=\H^1(w\le \tau)=\sum_i\H^1(w_i\le \tau),\qquad \mu_v(\tau)=\H^1(v\le \tau).$$
It is easy to see that, by the Cavalieri Formula, $\|v\|_{L^p([0,l])}=\|w\|_{L^p(C)}$, for each $p\ge1$. By the co-area formula \eqref{coa}
\be\label{e1r3}
\int_C|w'|^2\,d\H^1
=\int_0^{+\infty}\!\!\!\Big(\sum_{w=\tau}|w'|\Big)\,d\tau
\ge\int_0^{+\infty}\!\!\!\Big(\sum_{w=\tau}\frac{1}{|w'|}\Big)^{-1}\!\!\!d\tau
=\int_0^{+\infty}\frac{d\tau}{\mu'_w(\tau)},
\ee
where we used the Cauchy-Schwartz inequality and the identity
$$\mu_w(t)=\H^1(\{w\le t\})=\int_{w\le t}\frac{|w'|}{|w'|}\,ds=\int_0^t\Big(\sum_{w=s}\frac{1}{|w'|}\Big)\,ds$$
which implies that $\mu_w'(t)=\sum_{w=t}\frac{1}{|w'|}$. The same argument applied to $v$ gives:
\be\label{e2r3}
\int_0^l|v'|^2\,dx=\int_0^{+\infty}\Big(\sum_{v=\tau}|v'|\Big)\,d\tau=\int_0^{+\infty}\frac{d\tau}{\mu_v'(\tau)}.
\ee
Since $\mu_w=\mu_v$, the conclusion follows.
\end{oss}

The following Theorem shows that it is enough to study the problem \eqref{minenergy1} on the class of finite graphs embedded in $\R^d$. Consider the subset $\A_{N}(\D;l)\subset\A(\D;l)$ of those sets $C$ for which there exists a finite family $\gamma_i:[0,l_i]\to\R$, $i=1,\dots,n$ with $n\le N$, of injective rectifiable curves such that $\cup_i\g_i([0,l_i])=C$ and $\g_i((0,l_i))\cap\g_j((0,l_j))=\emptyset$, for each $i\ne j$.

\begin{teo}\label{th1}
Consider the set of distinct points $\D=\{D_1,\dots,D_k\}\subset\R^d$ and $l>0$. We have that
\be\label{inf1}
\inf\big\{\E(C;\D)\ :\ C\in \A(\D;l)\big\}=\inf\big\{\E(C;\D)\ :\ C\in \A_N(\D;l)\big\},
\ee
where $N=2k-1$. Moreover, if $C$ is a solution of the problem \eqref{minenergy1}, then there is also a solution $\widetilde C$ of the same problem such that $\widetilde C\in\A_N(\D;l)$.
\end{teo}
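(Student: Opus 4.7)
The plan is to construct, for any $C\in\A(\D;l)$, a competitor $\widetilde C\in\A_N(\D;l)$ with $\E(\widetilde C;\D)\le\E(C;\D)$; both the infimum equality \eqref{inf1} and the statement about optimal representatives follow immediately.

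\emph{Step 1 (reduction to finite graphs).} Using the decomposition \eqref{refl}, $C$ is, up to an $\H^1$-null set, a countable union of injective arcs $\gamma_i([0,l_i])$ glued at endpoints. Truncating to the first $n$ arcs and transferring the excess length $\sum_{i>n}l_i$ to one retained arc yields $C_n\in\A_{k_n}(\D;l)$; the associated energy functions are uniformly bounded in $H^1$ and converge in $L^2$ to the energy function of $C$, so by the semicontinuity from \eqref{refl}, $\E(C_n;\D)\to\E(C;\D)$. It therefore suffices to handle a finite graph.

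\emph{Step 2 (structural reductions).} For a finite graph $C$, I would apply three operations, each of which does not increase the energy:
\begin{enumerate}
\item[(i)] Suppress any non-Dirichlet vertex of degree $2$ by merging its two incident arcs into a single arc of total length; $C$ and hence its energy are unchanged.
\item[(ii)] If $C$ contains a cycle, cut it at a point where $u$ attains its maximum on the cycle; since $-u''=1$ on each arc, $u$ is concave, and a decreasing-rearrangement argument on the cycle shows the energy does not increase.
\item[(iii)] At any branching vertex $v$ carrying several pendant arcs (arcs whose free endpoint is a non-Dirichlet leaf), merge them into a single pendant of combined length using the Remark \ref{ps} argument applied with the boundary value $u(v)$; then consolidate pendants across distinct branching vertices, bringing all the pendant length to a single leaf.
\end{enumerate}
After these operations, $\widetilde C$ is a tree in which every non-Dirichlet branching vertex is trivalent and there is at most one non-Dirichlet pendant leaf; with $m\le k-1$ trivalent internal vertices and $p\le1$ pendant leaves, the handshake identity $k+3m+p=2E$ combined with $E=k+m+p-1$ yields $E\le2k-1$.

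\emph{Main obstacle.} The delicate parts are the energy monotonicity in (ii) and the global half of (iii). Both rely on adapting the rearrangement of Remark \ref{ps} to subsets on which $u$ does not vanish on the boundary: on each arc $u$ is quadratic and concave, so the distribution function of $u$ restricted to a cycle or a union of pendants is convex, and replacing $u$ by its equi-distributed rearrangement on a single interval of the same length cannot increase the local energy, while the Kirchhoff condition at the meeting vertex is preserved since it depends only on $u(v)$. Consolidating pendants across different branching vertices is subtler: one transports length from a lower-valued pendant to a higher-valued one and uses the identity $\E(C;\D)=-\frac12\int_C u\,d\H^1$ (obtained by integration by parts using the Dirichlet and Kirchhoff conditions) together with the fact that pendant contributions grow with $u(v)$ to verify the energy decrease.
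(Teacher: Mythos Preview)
Your outline has two genuine gaps, and the paper's argument avoids both by taking a rather different route.

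\textbf{Step 1 does not work as written.} The decomposition behind \eqref{refl} comes from the Second Rectifiability Theorem: it gives a countable family of injective Lipschitz arcs covering $C$ up to an $\H^1$-null set, but these arcs are \emph{not} required to meet only at endpoints. Hence the union of the first $n$ of them is not, in general, an element of $\A_{k_n}(\D;l)$ (which demands $\gamma_i((0,l_i))\cap\gamma_j((0,l_j))=\emptyset$), it need not be connected, and it need not contain all the Dirichlet points. ``Transferring the excess length to one retained arc'' is also undefined as an operation on subsets of $\R^d$: you must say where in $\R^d$ the extra length goes while keeping the set connected and of Hausdorff length exactly $l$. Finally, the claimed $L^2$-convergence of the energy functions and the ensuing semicontinuity are asserted, not proved.

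\textbf{Step 2 mixes abstract-graph operations with sets in $\R^d$.} The class $\A(\D;l)$ consists of closed connected subsets of $\R^d$. ``Cutting a cycle at a point'' does nothing to a subset of $\R^d$ (the two new endpoints coincide, so the set is unchanged); to actually lower the edge count you must re-embed the opened cycle somewhere, and you do not say how. The same issue arises when you ``merge pendants across distinct branching vertices'' or ``transport length from a lower-valued pendant to a higher-valued one'': these are moves on abstract metric graphs, and you never produce the resulting competitor as a set in $\R^d$ containing $\D$. The rearrangement inequalities you invoke for pieces with non-zero boundary values are also only sketched; in particular the claim that ``the Kirchhoff condition at the meeting vertex is preserved since it depends only on $u(v)$'' is false---Kirchhoff's law involves the one-sided derivatives, not just the value.

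\textbf{What the paper does instead.} The paper bypasses both issues in one stroke. Given any $C\in\A(\D;l)$ with energy function $w$, it builds a \emph{skeleton} $C'\subset C$ by taking a geodesic in $C$ from $D_1$ to $D_2$, then a geodesic from $D_3$ to $D_1$ truncated at its first meeting with the previous curve, and so on. This yields a tree $C'\subset C$ (hence automatically a subset of $\R^d$ containing $\D$) made of at most $2k-3$ arcs meeting only at endpoints. The remainder $\Gamma:=C\setminus C'$ (up to a null set) is then handled by a \emph{single} decreasing rearrangement: one builds a monotone $v:[0,\H^1(\Gamma)]\to\R$ equi-distributed with $w|_\Gamma$, attaches a fresh injective arc $\sigma$ of length $\H^1(\Gamma)$ to $C'$ at the point $x'$ where $w|_{C'}$ is maximal, and defines on $\sigma$ the shifted function $v+w(x')-v(0)$. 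The resulting $\widetilde C=C'\cup\mathrm{Im}(\sigma)$ is a connected set in $\R^d$ of length $l$, lies in $\A_{2k-1}(\D;l)$ (the extra two arcs come from possibly splitting one edge of $C'$ at $x'$ and from $\sigma$), and $J(\widetilde w)\le J(w)$ by the co-area argument of Remark~\ref{ps}. No preliminary reduction to finite graphs, no cycle-cutting, and only one rearrangement---applied to the entire ``excess'' at once---are needed.
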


\begin{proof}
Consider a connected set $C\in\A(\D;l)$. We show that there is a set $\widetilde C\in\A_N(\D;l)$ such that $\E(\widetilde C;\D)\le \E(C;\D)$. Let $\eta_1:[0,a_1]\to C$ be a geodesic in $C$ connecting $D_1$ to $D_2$ and let $\eta_2:[0,a]\to C$ be a geodesic connecting $D_3$ to $D_1$. Let $a_2$ be the smallest real number such that $\eta_2(a_2)\in\eta_1([0,a_1])$. Then, consider the geodesic $\eta_3$ connecting $D_4$ to $D_1$ and the smallest real number $a_3$ such that $\eta_3(a_3)\in\eta_1([0,a_1])\cup\eta_2([0,a_2])$. Repeating this operation, we obtain a family of geodesics $\eta_i$, $i=1,\dots,k-1$ which intersect each other in a finite number of points. Each of these geodesics can be decomposed in several parts according to the intersection points with the other geodesics (see Figure \ref{f1th1}).

\begin{figure}[h]
\centerline{\includegraphics[width=10.0cm]{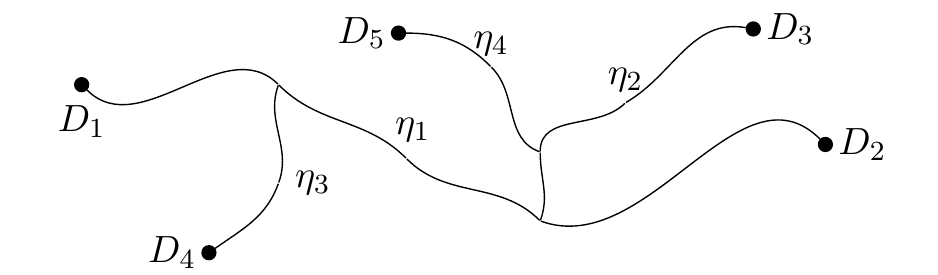}}
\caption{Construction of the set $C'$.}
\label{f1th1}
\end{figure}

So, we can consider a new family of geodesics (still denoted by $\eta_i$), $\eta_i:[0,l_i]\to C$, $i=1,\dots,n$, which does not intersect each other in internal points. Note that, by an induction argument on $k\ge 2$, we have $n\le 2k-3$. Let $C'=\cup_i\eta_i([0,l_i])\subset C$. By the Second Rectifiability  Theorem (see \cite[Theorem 4.4.8]{at}), we have that 
$$C=C'\cup E\cup\Gamma,$$
where $\H^1(E)=0$ and $\Gamma=\left(\bigcup_{j=1}^{+\infty}\g_j\right)$, where $\g_j:[0,l_j]\to C$ for $j\ge 1$ is a family of Lipschitz curves in $C$. Moreover, we can suppose that $\H^1(\Gamma\cap C')=0$. In fact, if $\H^1(Im(\g_j)\cap C')\neq0$ for some $j\in\N$, we consider the restriction of $\gamma_j$ to (the closure of) each connected component of $\g_j^{-1}(\R^d\setminus C')$.

Let $w\in H^1_0(C;\D)$ be the energy function on $C$ and let $v:[0,\H^1(\Gamma)]\to\R$ be a monotone increasing function such that $|\{v\le\tau\}|=\H^1(\{w\le\tau\}\cap\Gamma)$. Reasoning as in Remark \ref{ps}, we have that 
\be\label{e1t1}
\frac12\int_0^{\H^1(\Gamma)}|v'|^2\,dx-\int_0^{\H^1(\Gamma)}v\,dx
\le\frac12\int_{\Gamma}|w'|^2\,d\H^1-\int_{\Gamma}w\,d\H^1.
\ee

Let $\sigma:[0,\H^1(\Gamma)]\to\R^d$ be an injective arc-length parametrized curve such that $Im(\sigma)\cap C'=\sigma(0)=x'$, where $x'\in C'$ is the point where $w_{|C'}$ achieves its maximum. Let $\widetilde C=C'\cup Im(\sigma)$. Notice that $\widetilde C$ connects the points $D_1,\dots,D_k$ and has length $\H^1(\widetilde C)=\H^1(C')+\H^1(Im(\sigma))=\H^1(C')+\H^1(\Gamma)=l$. Moreover, we have 
\be\label{endecr}
\E(\widetilde C;\D)\le J(\widetilde w)\le J(w)=\E(C;\D),
\ee
where $\widetilde w$ is defined by
\be
\widetilde w(x)=
\begin{cases}
w(x),&\hbox{if }x\in C',\\
v(t)+w(x')-v(0),&\hbox{if }x=\sigma(t).
\end{cases}
\ee
We have then \eqref{endecr}, i.e. the energy decreases. We conclude by noticing that the point $x'$ where we attach $\sigma$ to $C'$ may be an internal point for $\eta_i$, i.e. a point such that $\eta^{-1}_i(x')\in(0,l_i)$. Thus, the set $\widetilde C$ is composed of at most $2k-1$ injective arc-length parametrized curves which does not intersect in internal points, i.e. $\widetilde C\in \A_{2k-1}(\D;l)$. 
\end{proof}

\begin{oss}\label{optcondr}
Theorem \ref{th1} above provides a nice class of admissible sets, where to search a minimizer of the energy functional $\E$. Indeed, according to its proof, we may limit ourselves to consider only graphs $C$ such that:
\begin{enumerate}
\item $C$ is a tree, i.e. it does not contain any closed loop;
\item the Dirichlet points $D_i$ are vertices of degree one (endpoints) for $C$;
\item there are at most $k-1$ other vertices; if a vertex has degree three or more, we call it Kirchhoff point;
\item there is at most one vertex of degree one for $C$ which is not a Dirichlet point. In this vertex the energy function $w$ satisfies Neumann boundary condition $w'=0$ and so we call it Neumann point.
\end{enumerate}
The previous properties are also necessary conditions for the optimality of the graph $C$ (see Proposition \ref{oc} for more details).  
\end{oss}

As we show in Example \ref{example1}, the problem \eqref{minenergy1} may not have a solution in the class of connected sets. It is worth noticing that the lack of existence only occurs for particular configurations of the Dirichlet points $D_i$ and not because of some degeneracy of the cost functional $\E$. In fact, we are able to produce other examples in which an optimal graph exists (see Section \ref{s5}).

\section{Sobolev space and Dirichlet Energy of a metric graph}\label{s3}

Let $V=\{V_1,\dots,V_N\}$ be a finite set and let $E\subset\big\{e_{ij}=\{V_i ,V_j \}\big\}$ be a set of pairs of elements of $V$. We define combinatorial graph (or just graph) a pair $\Gamma=(V,E)$. We say the set $V=V(\Gamma)$ is the set of vertices of $\Gamma$ and the set $E=E(\Gamma)$ is the set of edges. We denote with $|E|$ and $|V|$ the cardinalities of $E$ and $V$ and with $\deg(V_i)$ the degree of the vertex $V_i$, i.e. the number of edges incident to $V_i$.

A \emph{path} in the graph $\Gamma$ is a sequence $V_{\alpha_0},\dots,V_{\alpha_n}\in V$ such that for each $k=0,\dots,n-1$, we have that $\{V_{\alpha_k} ,V_{\alpha_{k+1}} \}\in E$. With this notation, we say that the path connects $V_{i_0}$ to $V_{i_\alpha}$. The path is said to be \emph{simple} if there are no repeated vertices in $V_{\alpha_0},\dots,V_{\alpha_n}$. We say that the graph $\Gamma=(V,E)$ is connected, if for each pair of vertices $V_i,V_j\in V$ there is a path connecting them. We say that the connected graph $\Gamma$ is a tree, if after removing any edge, the graph becomes not connected.

If we associate a non-negative length (or weight) to each edge, i.e. a map $l:E(\Gamma)\rightarrow [0,+\infty)$, then we say that the couple $(\Gamma,l)$ determines a metric graph of length
$$l(\Gamma):=\sum_{i<j}l(e_{ij}).$$

A function $u:\Gamma\rightarrow\R^n$ on the metric graph $\Gamma$ is a collection of functions $u_{ij}:[0,l_{ij}]\rightarrow\R$, for $1\le i\neq j\le N$, such that:
\begin{enumerate}
\item $u_{ji}(x)=u_{ij}(l_{ij}-x)$, for each $1\le i\neq j\le N$,
\item $u_{ij}(0)=u_{ik}(0)$, for all $\{i,j,k\}\subset\{1,\dots,N\}$,
\end{enumerate}
where we used the notation $l_{ij}=l(e_{ij})$. A function $u:\Gamma\rightarrow\R$ is said continuous ($u\in C(\Gamma)$), if $u_{ij}\in C([0,l_{ij}])$, for all $i,j\in\{1,\dots,n\}$. We call $L^p(\Gamma)$ the space of $p$-summable functions ($p\in[1,+\infty)$), i.e. the functions $u=(u_{ij})_{ij}$ such that 
$$\|u\|_{L^p(\Gamma)}^p:=\frac{1}{2}\sum_{i,j}\|u_{ij}\|_{L^p(0,l_{ij})}^p <+\infty,$$
where $\|\cdot\|_{L^p(a,b)}$ denotes the usual $L^p$ norm on the interval $[a,b]$. As usual, the space $L^2(\Gamma)$ has a Hilbert structure endowed by the scalar product:
$$\<u,v\>_{L^2(\Gamma)}:=\frac12\sum_{i,j}\<u_{ij},v_{ij}\>_{L^2(0,l_{ij})}.$$


We define the Sobolev space $H^1(\Gamma)$ as:
\be
H^1(\Gamma)=\big\{u\in C(\Gamma): u_{ij}\in H^1([0,l_{ij}]),\ \forall i,j\in\{1,\dots,n\}\big\},
\ee
which is a Hilbert space with the norm
\be
\|u\|_{H^1(\Gamma)}^2=\frac12\sum_{i,j}\|u_{ij}\|_{H^1([0,l_{ij}])}^2=\frac12\sum_{i,j}\left(\int_0^{l_{ij}}|u_{ij}|^2dx+\int_0^{l_{ij}}|u'_{ij}|^2dx\right).
\ee

\begin{oss}
Note that for $u\in H^1(\Gamma)$ the family of derivatives $\big(u_{ij}'\big)_{1\le i\neq j\le N}$ is not a function on $\Gamma$, since $u_{ij}'(x)=\frac{\partial}{\partial x}u_{ji}(l_{ij}-x)=-u_{ji}'(l_{ij}-x)$. Thus, we work with the function
$|u'|=\big(|u_{ij}'|\big)_{1\le i\ne j\le N}\in L^2(\Gamma)$.
\end{oss}

\begin{oss}\label{cptgam}
The inclusions $H^1(\Gamma)\subset C(\Gamma)$ and $H^1(\Gamma)\subset L^2(\Gamma)$ are compact, since the corresponding inclusions, for each of the intervals $[0,l_{ij}]$, are compact. By the same argument, the $H^1$ norm is lower semicontinuous with respect to the strong $L^2$ convergence of the functions in $H^1(\Gamma)$.
\end{oss}

For any subset $W=\{W_1,\dots,W_k\}$ of the set of vertices $V(\Gamma)=\left\{V_1,\dots,V_N\right\}$, we introduce the Sobolev space with \emph{Dirichlet boundary conditions} on $W$:
\be
H^1_0(\Gamma;W)=\big\{u\in H^1(\Gamma)\ :\ u(W_1)=\dots=u(W_k)=0\big\}.
\ee

\begin{oss}
Arguing as in Remark \ref{cpt} we have that for each $u\in H^1_0(\Gamma;W)$ and, more generally, for each $u\in H^1(\Gamma)$ such that $u(V_\alpha)=0$ for some $\alpha=1,\dots,N$, the Poincar\'e inequality
\be\label{pncr3}
\|u\|_{L^2(\Gamma)}\le l^{1/2}\|u\|_{L^\infty}\le l\|u'\|_{L^2(\Gamma)},
\ee
holds, where
$$\|u'\|^2_{L^2(\Gamma)}:=\int_{\Gamma}|u'|^2\,dx:=\sum_{i,j}\int_0^{l_{ij}}|u'_{ij}|^2\,dx.$$
\end{oss}



On the metric graph $\Gamma$, we consider the Dirichlet Energy with respect to $W$:
\be\label{energy}
\E(\Gamma;W)=\inf\big\{J(u)\ :\ u\in H^1_0(\Gamma;W)\big\},
\ee
where the functional $J:H^1_0(\Gamma;W)\to\R$ is defined by
\be
J(u)=\frac12\int_{\Gamma}|u'|^2dx-\int_{\Gamma}u\,dx.
\ee

\begin{lemma}\label{enfun}
Given a metric graph $\Gamma$ of length $l$ and Dirichlet points $\{W_1,\dots,W_k\}\subset V(\Gamma)=\{V_1,\dots,V_N\}$, there is a unique function $w=(w_{ij})_{1\le i\ne j\le N}\in H^1_0(\Gamma;W)$ which minimizes the functional $J$. Moreover, we have
\begin{enumerate}[(i)]
\item for each $1\le i\ne j\le N$ and each $t\in(0,l_{ij})$, $-w_{ij}''=1$;
\item at every vertex $V_i\in V(\Gamma)$, which is not a Dirichlet point, $w$ satisfies the Kirchhoff's law: 
$$\sum_{j}w_{ij}'(0)=0,$$
where the sum is over all $j$ for which the edge $e_{ij}$ exists;
\end{enumerate} 
Furthermore, the conditions $(i)$ and $(ii)$ uniquely determine $w$.
\end{lemma}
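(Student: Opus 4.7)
The plan is to carry out three steps: existence and uniqueness of $w$ by the direct method, derivation of the optimality conditions $(i)$ and $(ii)$ by first variation, and finally the uniqueness of $w$ given those conditions.

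First, I would establish coercivity of $J$ on $H^1_0(\Gamma;W)$ by combining the Poincar\'e inequality \eqref{pncr3} (available since $W\neq\emptyset$) with the estimate $\int_\Gamma u\,dx\le l^{1/2}\|u\|_{L^2(\Gamma)}\le l^{3/2}\|u'\|_{L^2(\Gamma)}$ and Young's inequality, obtaining a bound of the form $J(u)\ge\frac14\|u'\|_{L^2(\Gamma)}^2-C(l)$. A minimizing sequence $u_n$ is then bounded in $H^1(\Gamma)$ and, by Remark \ref{cptgam}, admits a subsequence converging strongly in $L^2(\Gamma)$ and in $C(\Gamma)$; the limit $w$ still vanishes on $W$ by uniform convergence at the vertices. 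Lower semicontinuity of $\|u'\|_{L^2(\Gamma)}^2$ (Remark \ref{cptgam}) and continuity of the linear term yield $J(w)\le\liminf J(u_n)$, so $w$ is a minimizer; uniqueness follows from strict convexity of $u\mapsto\int_\Gamma|u'|^2\,dx$ together with linearity of $\int_\Gamma u\,dx$.

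For the optimality conditions I would use the first variation $\int_\Gamma w'\varphi'\,dx=\int_\Gamma \varphi\,dx$, valid for every $\varphi\in H^1_0(\Gamma;W)$. Testing with $\varphi\in C^\infty_c((0,l_{ij}))$ extended by zero on the other edges (continuous on $\Gamma$, zero at every vertex, hence admissible) shows that $w_{ij}$ is a weak, and therefore classical, solution of $-w_{ij}''=1$, giving $(i)$. For $(ii)$, fix a non-Dirichlet vertex $V_\alpha$ and choose $\varphi$ to be the continuous piecewise-linear hat function with $\varphi(V_\alpha)=1$, vanishing at every other vertex and supported on short initial segments of the edges incident to $V_\alpha$; such a $\varphi$ is admissible. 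Integration by parts on each incident edge, together with $(i)$ used on the bulk, cancels the $\int_\Gamma\varphi\,dx$ contributions and leaves $\sum_j w_{ij}'(0)\,\varphi(V_\alpha)=0$, which is Kirchhoff's law.

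For the last claim, if $w_1,w_2$ both satisfy $(i)$ and $(ii)$ with the same Dirichlet datum, then $v=w_1-w_2\in H^1_0(\Gamma;W)$ satisfies $v_{ij}''=0$ on every edge and Kirchhoff at every non-Dirichlet vertex. Multiplying the edge equation by $v$ and integrating by parts, all endpoint contributions vanish (by $v=0$ at the Dirichlet vertices, and by continuity of $v$ together with $\sum_j v_{ij}'(0)=0$ at the remaining ones), yielding $\|v'\|_{L^2(\Gamma)}=0$. Since $\Gamma$ is connected and $v$ vanishes on $W$, it follows that $v\equiv 0$. The main point requiring care is the bookkeeping of the edge parametrization convention $u_{ji}(x)=u_{ij}(l_{ij}-x)$, which dictates the sign of the derivatives entering the Kirchhoff sum; once this is in place, everything reduces to the standard direct method and integration-by-parts routine.
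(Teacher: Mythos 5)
Your proposal is correct and follows essentially the same route as the paper: the direct method (coercivity via the Poincar\'e inequality, compactness from Remark \ref{cptgam}, lower semicontinuity) for existence and uniqueness, followed by the first variation tested with functions supported inside a single edge for $(i)$ and with a hat function near a non-Dirichlet vertex for $(ii)$. The only minor divergence is in the final claim, where the paper observes that any $u$ satisfying $(i)$ and $(ii)$ is an extremal of the convex functional $J$ and hence equals $w$, while you prove uniqueness of the boundary value problem directly via an energy identity for the difference; both arguments are valid and essentially equivalent.
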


\begin{proof}
The existence is a consequence of Remark \ref{cptgam} and the uniqueness is due to the strict convexity of the $L^2$ norm.
For any $\varphi\in H^1_0(\Gamma;W)$, we have that $0$ is a critical point for the function
$$\eps\mapsto\frac{1}{2}\int_\Gamma|(w+\eps\varphi)'|^2dx-\int_\Gamma (w+\eps\varphi)\,dx.$$
Since $\varphi$ is arbitrary, we obtain the first claim. The Kirchhoff's law at the vertex $V_i$ follows by choosing $\varphi$ supported in a ``small neighborhood'' of $V_i$. The last claim is due to the fact that if $u\in H^1_0(\Gamma;W)$ satisfies $(i)$ and $(ii)$, then it is an extremal for the convex functional $J$ and so, $u=w$.
\end{proof}

\begin{oss}\label{coarea2}
As in Remark \ref{coarea} we have that the co-area formula holds for the functions $u\in H^1(\Gamma)$ and any positive Borel (on each edge) function $v:\Gamma\to\R$:
\begin{align}
\int_\Gamma v(x)|u'|(x)\,dx
&=\sum_{1\le i<j\le N}\int_0^{l_{ij}}|u'_{ij}(x)|\,v(x)\,dx\nonumber\\
&=\sum_{1\le i<j\le N}\int_0^{+\infty}\!\!\Big(\!\!\!\sum_{u_{ij}(x)=\tau}\!\!\!v(x)\Big)\,d\tau\label{coa2}\\
&=\int_0^{+\infty}\!\!\Big(\sum_{u(x)=\tau}\!\!\!v(x)\Big)\,d\tau.\nonumber
\end{align}
\end{oss}

\subsection{Optimization problem for the Dirichlet Energy on the class of metric graphs}\label{s4}

We say that the continuous function $\gamma=(\g_{ij})_{1\le i\neq j\le N}:\Gamma\to\R^d$ is an \emph{immersion} of the metric graph $\Gamma$ into $\R^d$, if for each $1\le i\neq j\le N$ the function $\g_{ij}:[0,l_{ij}]\to\R^d$ is an injective arc-length parametrized curve. We say that $\gamma:\Gamma\to\R^d$ is an \emph{embedding}, if it is an immersion which is also injective, i.e. for any $i\ne j$ and $i'\ne j'$, we have 
\begin{enumerate}
\item $\g_{ij}((0,l_{ij}))\cap\g_{i'j'}([0,l_{i'j'}])=\emptyset$,
\item $\g_{ij}(0)=\g_{i'j'}(0)$, if and only if, $i=i'$.
\end{enumerate}

\begin{oss}\label{iso}
Suppose that $\Gamma$ is a metric graph of length $l$ and that $\gamma:\Gamma\to\R^d$ is an embedding. Then the set $C:=\gamma(\Gamma)$ is rectifiable of length $\H^1(\gamma(\Gamma))=l$ and the spaces $H^1(\Gamma)$ and $H^1(C)$ are isometric as Hilbert spaces, where the isomorphism is given by the composition with the function $\gamma$.
\end{oss}


Consider a finite set of distinct points $\D=\{D_1,\dots,D_k\}\subset\R^d$ and let $l\ge St(\D)$, where $St(\D)$ is the length of the Steiner set, the minimal among the ones connecting all the points $D_i$ (see \cite[Theorem 4.5.9]{at} for more details on the Steiner problem). Consider the optimization problem:
\be\label{enim}
\min\left\{\E(\Gamma;\V)\ :\ \Gamma\in CMG,\ l(\Gamma)=l,\ \V\subset V(\Gamma),\ \exists\gamma:\Gamma\to\R^d\hbox{ immersion, }\g(\V)=\D\right\},
\ee 
where $CMG$ indicates the class of connected metric graphs. Note that since $l\ge St(\D)$, there is a metric graph and an \emph{embedding} $\gamma:\Gamma\to\R^d$ such that $\D\subset\gamma(V(\Gamma))$ and so the admissible set in the problem \eqref{enim} is non-empty, as well as the admissible set in the problem
\be\label{enem}
\min\left\{\E(\Gamma;\V)\ :\ \Gamma\in CMG,\ l(\Gamma)=l,\ \V\subset V(\Gamma),\ \exists\gamma:\Gamma\to\R^d\hbox{ embedding, }\g(\V)=\D\right\}.
\ee 

\begin{oss}\label{nuovo}
We will see in Theorem \ref{th} that problem \eqref{enim} admits a solution, while Example \ref{example1} shows that in general an optimal embedded graph for problem \eqref{enem} may not exist. In the subsequent Section \ref{s5} we show some explicit examples for which the optimization problem \eqref{enem} admits a solution which is then an embedded graph or equivalently a connected set $C\in\A(\D;l)$. This classical framework is also considered in \cite{fr}, where the author studies the optimization problem \eqref{enem} in the case $\D=\{D_1\}$, corresponding to our Remark \ref{ps}.
\end{oss}

\begin{oss}\label{equivalence}
By Remark \ref{iso} and by the fact that the functionals we consider are invariant with respect to the isometries of the Sobolev space, we have that the problems \eqref{minenergy1} and \eqref{enem} are equivalent, i.e. if $\Gamma\in CMG$ and $\gamma:\Gamma\to\R^d$ is an embedding such that the pair $(\Gamma,\gamma)$ is a solution of \eqref{enem}, then the set $\gamma(\Gamma)$ is a solution of the problem \eqref{minenergy1}. On the other hand, if $C$ is a solution of the problem \eqref{minenergy1}, by Theorem \ref{th1}, we can suppose that $C=\bigcup_{i=1}^N\gamma_i([0,l_i])$, where $\gamma_i$ are injective arc-length parametrized curves, which does not intersect internally. Thus, we can construct a metric graph $\Gamma$ with vertices the set of points $\left\{\g_i(0),\g_i(l_i)\right\}_{i=1}^N\subset\R^d$, and $N$ edges of lengths $l_i$ such that two vertices are connected by an edge, if and only if they are the endpoints of the same curve $\g_i$. The function $\g=(\g_i)_{i=1,\dots,N}:\Gamma\to\R^d$ is an embedding by construction and by Remark \ref{iso}, we have $\E(C;\D)=\E(\Gamma;\D)$.
\end{oss}

\begin{teo}\label{th2}
Let $\D=\{D_1,\dots,D_k\}\subset\R^d$ be a finite set of points and let $l\ge St(\D)$ be a positive real number. Suppose that $\Gamma$ is a connected metric graph of length $l$, $\V\subset V(\Gamma)$ is a set of vertices of $\Gamma$ and $\gamma:\Gamma\to\R^d$ is an immersion (embedding) such that $\D=\gamma(\V)$. Then there exists a connected metric graph $\widetilde\Gamma$ of at most $2k$ vertices and $2k-1$ edges, a set $\widetilde{\V}\subset V(\widetilde\Gamma)$ of vertices of $\widetilde\Gamma$ and an immersion (embedding) $\widetilde\g:\widetilde\Gamma\to\R^d$ such that $\D=\widetilde\gamma(\widetilde{\V})$ and
\be\label{th2e0}
\E(\widetilde\Gamma;\widetilde{\V})\le\E(\Gamma;\V).
\ee
\end{teo}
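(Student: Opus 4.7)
The proof proceeds in parallel with that of Theorem \ref{th1}, reformulated in the language of metric graphs, with Lemma \ref{enfun} replacing the ad hoc existence of the energy function and Remark \ref{coarea2} playing the role of the one-dimensional co-area formula. Let $w\in H^1_0(\Gamma;\V)$ be the energy function on $\Gamma$. The plan is to extract a subtree $\Gamma_0\subset\Gamma$ connecting the Dirichlet vertices $W_1,\dots,W_k$, to rearrange the complementary length $L:=l-l(\Gamma_0)$ into a single pendant edge, and to glue that edge to $\Gamma_0$ at the point where $w|_{\Gamma_0}$ is maximal.

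I would build $\Gamma_0$ iteratively as in Theorem \ref{th1}: take a geodesic $\eta_1$ in $\Gamma$ from $W_1$ to $W_2$, and for $i=2,\dots,k-1$ a geodesic from $W_{i+1}$ to $W_1$ truncated at its first intersection with $\bigcup_{j<i}\eta_j$, subdividing the edges of $\Gamma$ at these new intersection points so that $\Gamma_0$ is a genuine sub-multigraph. The resulting $\Gamma_0$ is a tree with at most $k$ leaves and at most $k-2$ branching vertices, hence at most $2k-2$ vertices and $2k-3$ edges. Setting $R:=\Gamma\setminus\Gamma_0$, the rearrangement of Remark \ref{ps} produces a monotone $v:[0,L]\to\R$ with $|\{v\le\tau\}|=|\{w\le\tau\}\cap R|$; Cavalieri gives $\int_0^L v\,dt=\int_R w$, and the Cauchy--Schwarz / co-area argument of \eqref{e1r3}--\eqref{e2r3} gives $\int_0^L|v'|^2\,dt\le\int_R|w'|^2$. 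Choose $x'\in\Gamma_0$ maximizing $w|_{\Gamma_0}$; since $-w''=1$ forces concavity on each edge and Kirchhoff holds at interior vertices, the infimum of $w$ on $R$ is attained at the attachment vertices of $R$ to $\Gamma_0$, so $w(x')\ge v(0)$. Form $\widetilde\Gamma$ by subdividing (if needed) the edge of $\Gamma_0$ through $x'$ and then gluing a new pendant edge of length $L$ at $x'$ ending in a new Neumann vertex, and put $\widetilde\V=\{W_1,\dots,W_k\}$. Then $\widetilde\Gamma$ is a tree with at most $2k$ vertices and $2k-1$ edges, as required.

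The energy bound then follows by defining $\widetilde w$ equal to $w$ on $\Gamma_0$ and to $v(t)+w(x')-v(0)$ on the new edge parametrized by $t\in[0,L]$: continuity at $x'$ and the Dirichlet conditions are immediate, and combining the two rearrangement inequalities with $w(x')-v(0)\ge 0$ gives $J(\widetilde w)\le J(w)$ exactly as in \eqref{endecr}, yielding $\E(\widetilde\Gamma;\widetilde\V)\le\E(\Gamma;\V)$. Finally, set $\widetilde\gamma$ equal to $\gamma$ on $\Gamma_0$ and choose an injective arc-length parametrization $\sigma:[0,L]\to\R^d$ starting at $\gamma(x')$ for the new edge. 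In the immersion case any such $\sigma$ is admissible. The main obstacle is the embedding case, where one must further arrange $\sigma((0,L])\cap\gamma(\Gamma_0)=\emptyset$; I would handle this by noting that $\gamma(\Gamma_0)$ is a compact one-dimensional subset of $\R^d$, so for $d\ge 2$ almost every direction $e\in S^{d-1}$ yields a straight segment $\sigma(t)=\gamma(x')+te$ meeting $\gamma(\Gamma_0)$ only at $\gamma(x')$, giving the required embedded pendant.
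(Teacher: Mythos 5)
Your proposal follows essentially the same route as the paper's proof: a spanning subtree of truncated geodesics through the Dirichlet vertices, symmetrization of the leftover length into a single pendant edge attached where $w$ restricted to the subtree is maximal, the co-area/Cauchy--Schwarz rearrangement to get $J(\widetilde w)\le J(w)$, and the same count of at most $2k$ vertices and $2k-1$ edges. You even supply two details the paper leaves implicit: the continuity of $\widetilde w$ at the attachment point (via the shift by $w(x')-v(0)$) and the sign condition $w(x')\ge v(0)$ needed so that the shift does not worsen the linear term. Your Kirchhoff-plus-concavity justification of the latter is correct, though more than is needed: since every component of $\overline{R}$ meets $\Gamma_0$ and $w$ is continuous, one gets $\inf_R w\le\max_{\Gamma_0}w$ directly.

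The one step I would not accept as written is the embedded pendant for $d=2$. For $d\ge3$ your generic-direction argument is sound: the radial projection of $\gamma(\Gamma_0)\setminus B_\eps(\gamma(x'))$ onto the unit sphere is $\H^1$-finite, hence negligible in $S^{d-1}$. In the plane, however, a finite-length image can block \emph{every} direction: if an edge of $\Gamma_0$ ending at $x'$ is mapped onto a rectifiable spiral winding infinitely often around $\gamma(x')$, no straight segment issuing from $\gamma(x')$ meets $\gamma(\Gamma_0)$ only at $\gamma(x')$, so ``almost every direction'' fails and one must attach a non-straight injective curve (or re-embed the tree altogether). To be fair, this is precisely the point the paper also leaves untouched: its proof of this theorem only exhibits an immersion $\widetilde\gamma$, and the immersion statement is all that is used in Theorem \ref{th}. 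So for the immersion version your argument is complete and matches the paper; for the embedding version in $\R^2$ the straight-segment claim needs to be repaired or that case treated separately.
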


\begin{proof}
We repeat the argument from Theorem \ref{th1}. We first construct a connected metric graph $\Gamma'$ such that $V(\Gamma')\subset V(\Gamma)$ and the edges of $\Gamma'$ are appropriately chosen paths in $\Gamma$. The edges of $\Gamma$, which are not part of any of these paths, are symmetrized in a single edge, which we attach to $\Gamma'$ in a point, where the restriction of $w$ to $\Gamma'$ achieves its maximum, where $w$ is the energy function for $\Gamma$.

Suppose that $V_1,\dots,V_k\in \V\subset V(\Gamma)$ are such that $\g(V_i)=D_i$, $i=1,\dots,k$. We start constructing $\Gamma'$ by taking $\widetilde{\V}:=\{V_1,\dots,V_k\}\subset V(\Gamma')$. Let $\sigma_1=\{V_{i_0},V_{i_1},\dots, V_{i_s}\}$ be a path of different vertices (i.e. simple path) connecting $V_1=V_{i_s}$ to $V_2=V_{i_0}$ and let $\tilde\sigma_2=\{V_{j_0},V_{j_1},\dots, V_{j_t}\}$ be a simple path connecting $V_1=V_{j_t}$ to $V_3=V_{j_0}$. Let $t'\in\{1,\dots,t\}$ be the smallest integer such that $V_{j_{t'}}\in \sigma_1$. Then we set $V_{j_{t'}}\in V(\Gamma')$ and $\sigma_2=\{V_{j_0},V_{j_1},\dots, V_{j_{t'}}\}$. Consider a simple path $\tilde\sigma_3=\{V_{m_0},V_{m_1},\dots, V_{m_r}\}$ connecting $V_1=V_{m_r}$ to $V_3=V_{m_0}$ and the smallest integer $r'$ such that $V_{m_{r'}}\in \sigma_1\cup\sigma_2$. We set $V_{m_{r'}}\in V(\Gamma')$ and $\sigma_3=\{V_{m_0},V_{m_1},\dots, V_{m_{r'}}\}$. We continue the operation until each of the points $V_1,\dots,V_k$ is in some path $\sigma_j$. Thus we obtain the set of vertices $V(\Gamma')$. We define the edges of $\Gamma'$ by saying that $\{V_i,V_{i'}\}\in E(\Gamma')$ if there is a simple path $\sigma$ connecting $V_i$ to $V_{i'}$ and which is contained in some path $\sigma_j$ from the construction above; the length of the edge $\{V_i,V_{i'}\}$ is the sum of the lengths of the edges of $\Gamma$ which are part of $\sigma$. We notice that $\Gamma'\in CMG$ is a tree with at most $2k-2$ vertices and $2k-2$ edges. Moreover, even if $\Gamma'$ is not a subgraph of $\Gamma$ ($E(\Gamma')$ may not be a subset of $E(\Gamma)$), we have the inclusion $H^1(\Gamma')\subset H^1(\Gamma)$.

Consider the set $E''\subset E(\Gamma)$ composed of the edges of $\Gamma$ which are not part of none of the paths $\sigma_j$ from the construction above. We denote with $l''$ the sum of the lengths of the edges in $E''$. For any $e_{ij}\in E''$ we consider the restriction $w_{ij}:[0,l_{ij}]\to\R$ of the energy function $w$ on $e_{ij}$. Let $v:[0,l'']\to\R$ be the monotone function defined by the equality $|\{v\ge \tau\}|=\sum_{e_{ij}\in E''}|\{w_{ij}\ge\tau\}|$. Using the co-area formula \eqref{coa2} and repeating the argument from Remark \ref{minenergy1p}, we have that 
\be\label{th2e1}
\frac12\int_0^{l''}|v'|^2dx-\int_0^{l''}v(x)\,dx\le\sum_{e_{ij}\in E''}\left(\frac12\int_0^{l_{ij}}|w'_{ij}|^2dx-\int_0^{l_{ij}}w_{ij}\,dx\right).
\ee

Let $\widetilde\Gamma$ be the graph obtained from $\Gamma$ by creating a new vertex $W_1$ in the point, where the restriction $w_{|\Gamma'}$ achieves its maximum, and another vertex $W_2$, connected to $W_1$ by an edge of length $l''$. It is straightforward to check that $\widetilde\Gamma$ is a connected metric tree of length $l$ and that there exists an immersion $\widetilde\gamma:\widetilde\Gamma\to\R^d$ such that $\D=\widetilde\g(\widetilde{\V})$. The inequality \eqref{th2e0} follows since, by \eqref{th2e1}, $J(\widetilde w)\le J(w)$, where $\widetilde w$ is defined as $w$ on the edges $E(\Gamma')\subset E(\widetilde\Gamma)$ and as $v$ on the edge $\{W_1,W_2\}$.
\end{proof}

Before we prove our main existence result, we need a preliminary Lemma.

\begin{lemma}\label{lip}
Let $\Gamma$ be a connected metric tree and let $\V\subset V(\Gamma)$ be a set of Dirichlet vertices. Let $w\in H^1_0(\Gamma;\V)$ be the energy function on $\Gamma$ with Dirichlet conditions in $\V$, i.e. the function that realizes the minimum in the definition of $\E(\Gamma;\V)$. Then, we have the bound $\|w'\|_{L^\infty}\le l(\Gamma)$.  
\end{lemma}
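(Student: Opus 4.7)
The plan is a maximum-principle-plus-flow argument that bounds $|w'|$ pointwise by the length of a well-chosen subtree of $\Gamma$. As a preliminary step I would show $w\ge 0$ everywhere: replacing $w$ by $|w|$ produces another element of $H^1_0(\Gamma;\V)$ with $\||w|'\|_{L^2}=\|w'\|_{L^2}$ but $\int_\Gamma|w|\,dx\ge\int_\Gamma w\,dx$, so $J(|w|)\le J(w)$, and the uniqueness statement in Lemma~\ref{enfun} forces $w=|w|\ge 0$. This immediately implies that at every Dirichlet vertex $D\in\V$ and every incident edge $e'$, the outgoing derivative $w_D'(e')$ is non-negative, since $w$ attains its minimum $0$ at $D$.

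Next I locate where $M:=\|w'\|_{L^\infty}$ is attained. By Lemma~\ref{enfun}(i), $w'$ is affine with slope $-1$ on every edge, so $|w'|$ attains its edge-maximum at an endpoint; hence there exist a vertex $V$ and an incident edge $e$, with other endpoint $V'$, such that $|w_V'(e)|=M$, where $w_V'(e)$ denotes the derivative at $V$ in the direction entering $e$. Integrating $-w''=1$ along $e$ yields the identity $w_V'(e)+w_{V'}'(e)=l_e$. If one had $w_V'(e)=-M$, this identity would give $w_{V'}'(e)=l_e+M>M$, contradicting the definition of $M$; consequently $w_V'(e)=+M\ge 0$.

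For the main step I exploit the tree structure: since $\Gamma$ is a tree, $\Gamma\setminus\{V\}$ splits into connected components, and I let $\tilde T$ be the closed subtree consisting of $V$, the edge $e$, and the component that contains $V'$. Summing the per-edge identity $w_{U_1}'(e')+w_{U_2}'(e')=l_{e'}$ over all $e'\in E(\tilde T)$ and regrouping by vertex produces
$$l(\tilde T)=\sum_{U\in V(\tilde T)}\;\sum_{\substack{e'\in E(\tilde T)\\ e'\ni U}}w_U'(e').$$
Every interior vertex $U\ne V$ of $\tilde T$ has all of its $\Gamma$-edges inside $\tilde T$, since any neighbor of $V$ in $\Gamma$ other than $V'$ belongs to a different component of $\Gamma\setminus\{V\}$; hence the inner sum at such $U$ coincides with the full Kirchhoff sum, which vanishes when $U\notin\V$ by Lemma~\ref{enfun}(ii) (including the degree-one Neumann case) and is non-negative when $U\in\V$ by the first step. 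At the boundary vertex $V$, only the edge $e$ lies in $\tilde T$, contributing exactly $w_V'(e)=M$. Therefore $l(\tilde T)\ge M$, and since $\tilde T\subseteq\Gamma$ the desired bound $M\le l(\Gamma)$ follows. The delicate points I anticipate are the sign-pinning in the previous paragraph together with the tree-structure verification that every interior vertex of $\tilde T$ keeps all its edges in $\tilde T$; the remainder is essentially bookkeeping with the Kirchhoff identity.
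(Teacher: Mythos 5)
Your proof is correct, but it follows a genuinely different route from the paper's. You first normalize $w\ge 0$ (so all outgoing derivatives at Dirichlet vertices are nonnegative), locate the maximum $M$ of $|w'|$ at a vertex--edge incidence and pin its sign to $+M$ via the identity $w_{U_1}'(e')+w_{U_2}'(e')=l_{e'}$ for outgoing derivatives, and then sum this identity over the subtree $\widetilde T$ on the far side of the maximizing vertex $V$: Kirchhoff's law from Lemma~\ref{enfun} kills the contributions of interior non-Dirichlet vertices, nonnegativity handles the Dirichlet ones, and only the term $M$ survives at $V$, giving $M\le l(\widetilde T)\le l(\Gamma)$. The paper instead subdivides edges at the zeros of $w'$, orders the vertices by increasing $w$, restricts $w$ to the ``downstream'' subtree of a fixed edge, and reduces that subtree by an iterative surgery (series and parallel merging of edges, with explicit parabolic replacements) which never decreases the relevant derivative sum and terminates with a single edge, on which the derivative equals the subtree's length. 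Your one-shot summation -- essentially a discrete divergence theorem on $\widetilde T$ -- is shorter and dispenses with both the preliminary subdivision/relabelling and the induction over surgeries, whereas the paper's construction yields along the way the monotone downstream structure (its properties (a)--(d)) that it reuses informally elsewhere; both arguments use the tree hypothesis essentially (in your case, so that distinct edges at $V$ lie in distinct components of $\Gamma\setminus\{V\}$, hence $\widetilde T$ meets $V$ only through $e$ and is edge-closed at its other vertices) together with Kirchhoff's law. For completeness you should note that the maximizing incidence can be taken on an edge of positive length (zero-length edges carry no $L^\infty$ mass) and that the case $M=0$ is trivial, so that $w_{V'}'(e)=l_e+M>M$ is a genuine contradiction in the sign-pinning step; these are routine additions and do not affect the argument.
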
 
\begin{proof}
Up to adding vertices in the points where $|w'|=0$, we can suppose that on each edge $e_{ij}:=\{V_i,V_j\}\in E(\Gamma)$ the function $w_{ij}:[0,l_{ij}]\to\R^+$ is monotone. Moreover, up to relabel the vertices of $\Gamma$ we can suppose that if $e_{ij}\in V(\Gamma)$ and $i<j$, then $w(V_i)\le w(V_j)$. Fix $V_i,V_{i'}\in V(\Gamma)$ such that $e_{ii'}\in E(\Gamma)$. Note that, since the derivative is monotone on each edge, it suffices to prove that $|w_{ii'}'(0)|\le l(\Gamma)$. It is enough to consider the case $i<i'$, i.e. $w_{ii'}'(0)>0$. We construct the graph $\widetilde\Gamma$ inductively, as follows (see Figure \ref{f1lip}):
\begin{enumerate}
\item $V_i\in V(\widetilde \Gamma)$;
\item if $V_j\in V(\widetilde\Gamma)$ and $V_k\in V(\Gamma)$ are such that $e_{jk}\in E(\Gamma)$ and $j<k$, then $V_k\in V(\widetilde\Gamma)$ and $e_{jk}\in E(\widetilde\Gamma)$.
\end{enumerate}

\begin{figure}[h]
\centerline{\includegraphics[width=10.0cm]{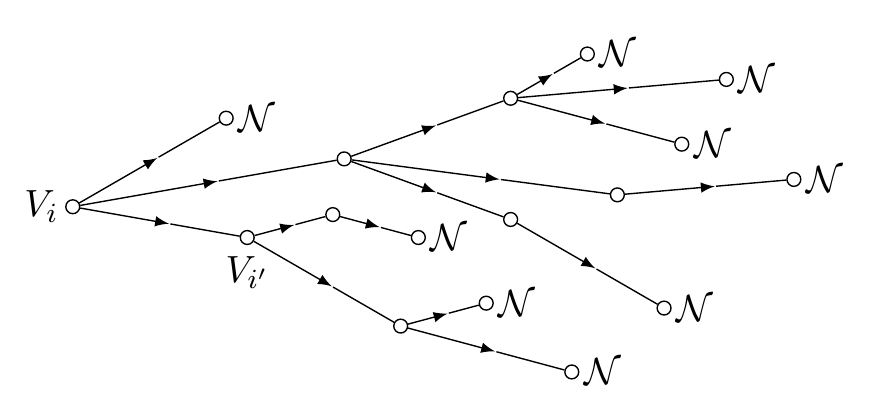}}
\caption{The graph $\widetilde\Gamma$; with the letter $\mathcal{N}$ we indicate the Neumann vertices.}
\label{f1lip}
\end{figure}

The graph $\widetilde\Gamma$ constructed by the above procedure and the restriction $\widetilde w\in H^1(\widetilde\Gamma)$ of $w$ to $\widetilde\Gamma$ have the following properties:
\begin{enumerate}[(a)]
\item On each edge $e_{jk}\in E(\widetilde\Gamma)$, the function $\widetilde w_{jk}$ is non-negative, monotone and $\widetilde w_{jk}''=-1$;
\item $\widetilde w(V_j)>\widetilde w(V_k)$ whenever $e_{jk}\in E(\widetilde\Gamma)$ and $j>k$;
\item if $V_j\in V(\widetilde\Gamma)$ and $j>i$, then there is exactly one $k<j$ such that $e_{kj}\in E(\widetilde\Gamma)$;
\item for $j$ and $k$ as in the previous point, we have that
$$0\le \widetilde w_{kj}'(l_{kj})\le \sum_{s}\widetilde w_{js}'(0),$$
where the sum on the right-hand side is over all $s>j$ such that $e_{sj}\in E(\widetilde\Gamma)$. If there are not such $s$, we have that $\widetilde w_{kj}'(l_{kj})=0$.
\end{enumerate}
The first three conditions follow by the construction of $\widetilde\Gamma$, while condition $(d)$ is a consequence of the Kirchkoff's law for $w$.\\
We prove that for any graph $\widetilde\Gamma$ and any function $\widetilde w\in H^1(\widetilde\Gamma)$, for which the conditions $(a)$, $(b)$, $(c)$ and $(d)$ are satisfied, we have that 
$$\sum_j\widetilde w_{ij}'(0)\le l(\widetilde\Gamma),$$ 
where the sum is over all $j\ge i$ and $e_{ij}\in E(\widetilde\Gamma)$. It is enough to observe that each of the operations $(i)$ and $(ii)$ described below, produces a graph which still satisfies $(a)$, $(b)$, $(c)$ and $(d)$. Let $V_j\in V(\widetilde\Gamma)$ be such that for each $s>j$ for which $e_{js}\in E(\widetilde\Gamma)$, we have that $\widetilde w_{js}'(l_{js})=0$ and let $k<j$ be such that $e_{jk}\in E(\widetilde\Gamma)$.
\begin{enumerate}[(i)]
\item If there is only one $s>j$ with $e_{js}\in E(\widetilde\Gamma)$, then we erase the vertex $V_j$ and the edges $e_{kj}$ and $e_{js}$ and add the edge $e_{ks}$ of length $l_{ks}:=l_{kj}+l_{js}$. On the new edge we define $\widetilde w_{ks}:[0,l_{sk}]\to\R^+$ as
$$\widetilde w_{ks}(x)=-\frac{x^2}{2}+l_{ks}\,x+\widetilde w_{kj}(0),$$
 which still satisfies the conditions above since $\widetilde w_{kj}'-l_{kj}\le l_{js}$, by $(d)$, and $\widetilde w_{ks}'=l_{ks}\ge \widetilde w_{kj}'(0)$.
\item If there are at least two $s>j$ such that $e_{js}\in E(\widetilde\Gamma)$, we erase all the vertices $V_s$ and edges $e_{js}$, substituting them with a vertex $V_S$ connected to $V_j$ by an edge $e_{jS}$ of length 
$$l_{jS}:=\sum_s l_{js},$$
where the sum is over all $s>j$ with $e_{js}\in E(\widetilde\Gamma)$. On the new edge, we consider the function $\widetilde w_{jS}$ defined by 
$$\widetilde w_{jS}(x)=-\frac{x^2}{2}+l_{jS}\,x+\widetilde w(V_j),$$
which still satisfies the conditions above since
$$\sum_{\{s:\,s>j\}} \widetilde w_{js}'(0)=\sum_{\{s:\,s>j\}} l_{js}=l_{jS}=\widetilde w_{jS}'(0).$$ 
\end{enumerate}
We apply $(i)$ and $(ii)$ until we obtain a graph with vertices $V_i,V_j$ and only one edge $e_{ij}$ of length $l(\widetilde\Gamma)$. The function we obtain on this graph is $-\frac{x^2}{2}+l(\widetilde\Gamma)x$ with derivative in $0$ equal to $l(\widetilde\Gamma)$. Since, after applying $(i)$ and $(ii)$, the sum $\sum_{j>i}\widetilde w_{ij}'(0)$ does not decrease, we have the thesis. 
\end{proof}

\begin{teo}\label{th}
Consider a set of distinct points $\D=\{D_1,\dots,D_k\}\subset\R^d$ and a positive real number $l\ge St(\D)$. Then there exists a connected metric graph $\Gamma$, a set of vertices $\V\subset V(\Gamma)$ and an immersion $\gamma:\Gamma\to\R^d$ which are solution of the problem \eqref{enim}. Moreover, $\Gamma$ can be chosen to be a tree of at most $2k$ vertices and $2k-1$ edges.
\end{teo}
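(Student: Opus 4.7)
The plan is to apply the direct method to a minimizing sequence $(\Gamma_n,\V_n,\gamma_n)$ for \eqref{enim}. Using Theorem \ref{th2}, I may replace each triple by a reduced one in which $\Gamma_n$ is a tree with $|V(\Gamma_n)|\le 2k$ and $|E(\Gamma_n)|\le 2k-1$, without raising the energy. There are only finitely many combinatorial trees on at most $2k$ labelled vertices, so a subsequence shares a common combinatorial structure $(V,E)$ and a common Dirichlet set $\V=\{V_1,\dots,V_k\}$. The edge lengths $l^n_{ij}\in[0,l]$ satisfy $\sum_{ij}l^n_{ij}=l$; passing to a further subsequence, $l^n_{ij}\to l_{ij}$ for every edge. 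Every vertex image $\gamma_n(V_\alpha)$ lies within graph-distance at most $l$ of $D_1$, hence inside the closed ball $\overline{B}_l(D_1)\subset\R^d$; extract a subsequence with $\gamma_n(V_\alpha)\to p_\alpha\in\R^d$ for all $\alpha$, so $p_\alpha=D_\alpha$ whenever $V_\alpha\in\V$. The arc-length property of the $\gamma_n$ on each edge gives $|p_i-p_j|\le l_{ij}$ in the limit; in particular any edge joining two Dirichlet vertices satisfies $l_{ij}>0$, since the $D_i$ are distinct.

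Next I construct the limit graph and an immersion into $\R^d$. Contract each edge $e_{ij}$ with $l_{ij}=0$, identifying its endpoints; this preserves the tree structure and the total length $l$, and by the previous remark never identifies two Dirichlet vertices. Let $\Gamma$ be the resulting tree, with at most $2k$ vertices and $2k-1$ edges, and let $\widetilde\V\subset V(\Gamma)$ be the image of the original Dirichlet vertices under the quotient. For each surviving edge $e_{ij}$ one has $l_{ij}\ge|p_i-p_j|$, so (in $\R^d$ with $d\ge 2$) one can pick an injective arc-length parametrized curve $\widetilde\gamma_{ij}:[0,l_{ij}]\to\R^d$ from $p_i$ to $p_j$, for instance the straight segment plus a small perpendicular detour absorbing the excess length. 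Assembling these produces an immersion $\widetilde\gamma:\Gamma\to\R^d$ with $\widetilde\gamma(\widetilde\V)=\D$.

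It remains to verify that the energy is lower semicontinuous along this limit. Let $w_n\in H^1_0(\Gamma_n;\V)$ be the energy function on $\Gamma_n$; by Lemma \ref{lip} we have $\|w_n'\|_{L^\infty}\le l$, and because $w_n$ vanishes on $\V$ while $\Gamma_n$ has diameter $\le l$, also $\|w_n\|_{L^\infty}\le l^2$. Reparametrizing each edge to $[0,1]$ and applying Arzel\`a--Ascoli, extract a subsequence converging uniformly on each edge to a Lipschitz function; on any collapsing edge $|w_n(V_i)-w_n(V_j)|\le l\cdot l^n_{ij}\to 0$, so the limit descends to a continuous function $w$ on the quotient $\Gamma$, vanishes on $\widetilde\V$, and therefore belongs to $H^1_0(\Gamma;\widetilde\V)$. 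The linear term $\int_{\Gamma_n}w_n\,dx$ converges to $\int_\Gamma w\,dx$ because contributions from collapsing edges are bounded by $l^2\,l^n_{ij}\to 0$ and on surviving edges one has uniform convergence on converging domains. For the quadratic term, on each surviving edge the (rescaled) derivatives are uniformly bounded in $L^2$ and converge weakly, so weak lower semicontinuity of $\|\cdot\|_{L^2}^2$ applies; the collapsing edges contribute a nonnegative term. Combining,
\[
\E(\Gamma;\widetilde\V)\le J(w)\le\liminf_n J(w_n)=\inf\eqref{enim},
\]
so $(\Gamma,\widetilde\V,\widetilde\gamma)$ is optimal. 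The main obstacle is precisely the bookkeeping around collapsing edges: one must simultaneously identify the correct quotient graph, verify that an immersion with prescribed Dirichlet images persists, and pass to the limit of the energy functions defined over a family of graphs whose domains of definition vary with $n$.
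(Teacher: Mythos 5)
Your proposal is correct and follows the same overall architecture as the paper's proof: a minimizing sequence reduced via Theorem \ref{th2} to trees with at most $2k$ vertices and $2k-1$ edges, extraction of a common combinatorial type, convergence of the edge lengths, contraction of the degenerate edges, and construction of a limit immersion from $l_{ij}\ge|X_i-X_j|$. Where you genuinely diverge is in the final passage to the limit of the energies. The paper does not use lower semicontinuity at all: it records the limits $a_i$ of the vertex values $w^n(V_i)$, uses Lemma \ref{lip} to see that $a_i=a_j$ across collapsing edges, explicitly defines the candidate limit as the parabola with $w_{ij}''=-1$ and boundary values $a_i,a_j$ on each surviving edge, and then verifies via Lemma \ref{enfun} that Kirchhoff's law survives the collapse (summing the conditions over each collapsed cluster, using $|(w^n_{ij})'(0)-(w^n_{ij})'(l^n_{ij})|\le l^n_{ij}\to0$), thereby proving the exact convergence $\E(\Gamma_n;\V)\to\E(\widetilde\Gamma;\widetilde\V)$. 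You instead run the standard direct-method argument: Arzel\`a--Ascoli on the reparametrized energy functions (again via Lemma \ref{lip}), descent of the limit to the quotient graph, and weak lower semicontinuity of the Dirichlet term, which yields only $\E(\Gamma;\widetilde\V)\le\liminf_n\E(\Gamma_n;\V)$ --- but that one-sided inequality is all that existence requires, since the limit triple is admissible. Your route is arguably more robust (it does not rely on the Euler--Lagrange characterization of the minimizer), while the paper's yields the stronger continuity statement. Two small points to tighten: the exclusion of identifications between Dirichlet vertices should be argued for \emph{chains} of collapsing edges, not only for a single edge joining two Dirichlet vertices (the same estimate $|D_i-D_j|\le\sum l^n_{\alpha\beta}\to0$ along the collapsing path gives the contradiction); and you should state explicitly that admissibility of $(\Gamma,\widetilde\V,\widetilde\gamma)$ gives the reverse inequality $\E(\Gamma;\widetilde\V)\ge\inf$, closing the chain of inequalities.
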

 
\begin{proof}
Consider a minimizing sequence $\left(\Gamma_n,\gamma_n\right)$ of connected metric graphs $\Gamma_n$ and immersions $\gamma_n:\Gamma_n\to\R^d$. By Theorem \ref{th2}, we can suppose that each $\Gamma_n$ is a tree with at most $2k$ vertices and $2k-1$ edges. Up to extracting a subsequence, we may assume that the metric graphs $\Gamma_n$ are the same graph $\Gamma$ but with different lengths $l^n_{ij}$ of the edges $e_{ij}$. We can suppose that for each $e_{ij}\in E(\Gamma)$ $l^n_{ij}\to l_{ij}$ for some $l_{ij}\ge0$ as $n\to\infty$. We construct the graph $\widetilde\Gamma$ from $\Gamma$ identifying the vertices $V_i,V_j\in V(\Gamma)$ such that $l_{ij}=0$. The graph $\widetilde\Gamma$ is a connected metric tree of length $l$ and there is an immersion $\widetilde\g:\widetilde\Gamma\to\R^d$ such that $\D\subset\widetilde{\gamma}(\widetilde\Gamma)$. In fact if $\{V_1,\dots V_N\}$ are the vertices of $\Gamma$, up to extracting a subsequence, we can suppose that for each $i=1,\dots,N$ $\gamma_n(V_i)\to X_i\in\R^d$. We define $\widetilde\gamma(V_i):=X_i$ and $\g_{ij}:[0,l_{ij}]\to\R^d$ as any injective arc-length parametrized curve connecting $X_i$ and $X_j$, which exists, since 
$$l_{ij}=\lim l^n_{ij}\ge \lim|\gamma_n(V_i)-\gamma_n(V_j)|=|X_i-X_j|.$$
To prove the theorem, it is enough to check that 
$$\E(\widetilde\Gamma;\V)=\lim_{n\to\infty}\E(\Gamma_n;\V).$$

Let $w^n=(w^n_{ij})_{ij}$ be the energy function on $\Gamma_n$. Up to a subsequence, we may suppose that for each $i=1,\dots, N$, $w^n(V_i)\to a_i\in\R$ as $n\to\infty$. Moreover, by Lemma \ref{lip}, we have that if $l_{ij}=0$, then $a_i=a_j$. On each of the edges $e_{ij}\in E(\widetilde\Gamma)$, where $l_{ij}>0$, we define the function $w_{ij}:[0,l_{ij}]\to\R$ as the parabola such that $w_{ij}(0)=a_i$, $w_{ij}(l_{ij})=a_j$ and $w''_{ij}=-1$ on $(0,l_{ij})$. Then, we have
$$\frac12\int_0^{l^n_{ij}}|(w^n_{ij})'|^2\,dx-\int_0^{l^n_{ij}} w^n_{ij}\,dx\xrightarrow[n\to\infty]{}\frac12\int_0^{l_{ij}}|(w_{ij})'|^2\,dx-\int_0^{l_{ij}} w_{ij}\,dx,$$
and so, it is enough to prove that $\widetilde w=(w_{ij})_{ij}$ is the energy function on $\widetilde\Gamma$, i.e. (by Lemma \ref{enfun}) that the Kirchoff's law holds in each vertex of $\widetilde\Gamma$. This follows since for each $1\le i\ne j\le N$ we have 
\begin{enumerate}
\item $(w^n_{ij})'(0)\to w'_{ij}(0)$, as $n\to\infty$, if $l_{ij}\ne0$;
\item $|(w^n_{ij})'(0)-(w^n_{ij})'(l^n_{ij})|\le l^n_{ij}\to 0$, as $n\to\infty$, if $l_{ij}=0$.
\end{enumerate}
The proof is then concluded.
\end{proof}

The proofs of Theorem \ref{th2} and Theorem \ref{th} suggest that a solution $(\Gamma,\V,\g)$ of the problem \eqref{enim} must satisfy some optimality conditions. We summarize this additional information in the following Proposition. 

\begin{prop}\label{oc}
Consider a connected metric graph $\Gamma$, a set of vertices $\V\subset V(\Gamma)$ and an immersion $\gamma:\Gamma\to\R^d$ such that $(\Gamma,\V,\g)$ is a solution of the problem \eqref{enim}. Moreover, suppose that all the vertices of degree two are in the set $\V$. Then we have that:
\begin{enumerate}[(i)]
\item the graph $\Gamma$ is a tree;
\item the set $\V$ has exactly $k$ elements, where $k$ is the number of Dirichlet points $\{D_1,\dots,D_k\}$;
\item there is at most one vertex $V_j\in V(\Gamma)\setminus\V$ of degree one;
\item if there is no vertex of degree one in $V(\Gamma)\setminus\V$, then the graph $\Gamma$ has at most $2k-2$ vertices and $2k-3$ edges;
\item if there is exactly one vertex of degree one in $V(\Gamma)\setminus\V$, then the graph $\Gamma$ has at most $2k$ vertices and $2k-1$ edges.
\end{enumerate} 
\end{prop}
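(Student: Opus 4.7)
My plan is to prove parts (i), (ii), and (iii) by contradiction: assuming the respective property fails, I construct a competitor $(\widetilde\Gamma,\widetilde\V,\widetilde\g)$ with strictly smaller energy, contradicting the optimality of $(\Gamma,\V,\g)$. The common mechanism is the symmetrization used in the proof of Theorem~\ref{th2}: whenever $\Gamma$ contains a ``superfluous'' piece (a cycle, a redundant Dirichlet vertex, or a second Neumann pendant), I remove that piece and reattach its total length as a single pendant edge at the point where the energy function $w$ attains its maximum on the remainder. Inequality~\eqref{th2e1} then provides $\E(\widetilde\Gamma;\widetilde\V)\le\E(\Gamma;\V)$, which I upgrade to strict inequality by examining the equality case in the Cauchy--Schwarz step of the co-area formula (as in Remark~\ref{ps}). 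Parts (iv) and (v) follow from (i)--(iii) by a degree-counting argument in the tree $\Gamma$.

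For (i), suppose $\Gamma$ has a cycle. Following Theorem~\ref{th2}, I extract a spanning subtree $\Gamma'$ from geodesic paths joining the vertices of $\V$, and let $E''$ denote the remaining edges, of positive total length $l''$. Symmetrizing $w|_{E''}$ onto a single pendant edge of length $l''$ attached at the maximum point of $w|_{\Gamma'}$ yields strict energy decrease: on a cycle, $w$ attains most level values in at least two distinct preimages, making the Cauchy--Schwarz step strict. For (ii), $|\V|\ge k$ is immediate; if $|\V|>k$, two vertices $V_\alpha,V_\beta\in\V$ share the image $\g(V_\alpha)=\g(V_\beta)$, and one modifies $\Gamma$ by identifying them (freeing the length of the unique path joining them in the tree $\Gamma$) and reattaches that length as a pendant at the maximum of $w$ on the modified graph, yielding strict decrease by the same symmetrization. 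For (iii), two leaves $V_a,V_b\in V(\Gamma)\setminus\V$ correspond to two Neumann pendants on each of which $w$ is a nontrivial monotone parabola by Lemma~\ref{enfun}; merging them into a single pendant again gives strict improvement by the Cauchy--Schwarz strictness.

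For (iv) and (v), since $\Gamma$ is a tree, the handshaking identity $\sum_v(\deg(v)-2)=-2$ gives $L\ge K+2$, where $L$ counts leaves and $K$ vertices of degree $\ge 3$. By (ii) and the hypothesis, $L_D+D_2+K_D=k$, with $L_D,D_2,K_D$ the Dirichlet leaves, degree-two vertices (all Dirichlet), and Dirichlet vertices of degree $\ge 3$; by (iii), the number of Neumann leaves $L_N\in\{0,1\}$. Since $L+D_2=L_D+L_N+D_2\le k+L_N$, one obtains $|V(\Gamma)|=L+D_2+K\le(L+D_2)+(L-2)\le 2(k+L_N)-2$, and the edge count follows from $|E(\Gamma)|=|V(\Gamma)|-1$. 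The principal obstacle is the strict-inequality step in (i)--(iii): Theorem~\ref{th2} only provides the weak inequality \eqref{th2e1}, and upgrading it to strict requires a careful equality analysis of the Cauchy--Schwarz step in the co-area formula, where the hypothesis that all degree-two vertices are Dirichlet is crucial to rule out degenerate cases in which $E''$ already consists of a single pendant attached at the maximum.
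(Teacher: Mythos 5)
Your counting argument for (iv)--(v) is correct and in fact more explicit than the paper, which just invokes the construction of Theorem~\ref{th2}. The real content of the proposition, however, is the \emph{strict} energy decrease in (i)--(iii), and this is exactly where your argument has gaps. In (i), the symmetrization inequality \eqref{th2e1} only sees the edges of $E''$: the Cauchy--Schwarz step is strict only if a set of levels of positive measure is attained at least twice \emph{inside} $E''$. The fact that $w$ restricted to a cycle takes most of its values at two points does not give this, because most of the cycle may lie in $\Gamma'$, which you leave untouched; $E''$ can consist of a single non-bridge edge on which $w$ is monotone, and then every level has exactly one preimage in $E''$ and \eqref{th2e1} is an equality. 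You acknowledge at the end that an equality-case analysis is needed, but the degenerate case you mention (a pendant edge at the maximum) is not the relevant one, and the hypothesis on degree-two vertices does not repair this. Compare with the paper's proof of (i), which needs no symmetrization: detach one endpoint of an edge whose removal keeps $\Gamma$ connected, so that $H^1_0(\Gamma;\V)\subset H^1_0(\widetilde\Gamma;\V)$ inside $\oplus_{ij}H^1([0,l_{ij}])$; equality of the energies would force $w$ to coincide with the energy function of $\widetilde\Gamma$, which is impossible because $w_{ij}'$ is nonzero at the detached endpoint (as $w_{ij}''=-1$) while the new energy function satisfies a Neumann condition there.

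Similar problems affect (ii) and (iii). In (ii) your surgery is not a well-defined competitor: deleting the path joining $V_\alpha$ to $V_\beta$ may disconnect $\Gamma$ (branches, possibly containing other Dirichlet vertices, hang off its interior vertices), and the strictness again rests on the unproved Cauchy--Schwarz claim; the paper keeps the graph and the immersion unchanged and simply drops the redundant vertex from $\V$, observing that equality of the two energies would force the energy function to satisfy simultaneously the Dirichlet condition and the Kirchhoff law at that vertex, incompatible with $w\ge0$ and $w''=-1$. In (iii), the ranges of $w$ on the two Neumann pendants can be disjoint, in which case every level is simple among the two pendants and merging them gives no strict gain from multiplicity; the paper instead detaches the pendant with the lower tip value and re-attaches it at the other Neumann tip, transplanting the function shifted up by $w(V_i)-w(V_j)\ge0$, and strictness comes from the uniqueness of the energy function (the transplanted function violates the Kirchhoff law at the old attachment vertex, where the removed edge contributed a nonzero derivative). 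Finally, in each of (i)--(iii) one must check that the modified object is still admissible for \eqref{enim}, i.e. still connected, of length $l$, and admitting an immersion with $\gamma(\V)=\D$; this is immediate for the paper's local modifications but is not discussed for yours.
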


\begin{proof}
We use the notation $V(\Gamma)=\{V_1,\dots,V_N\}$ for the vertices of $\Gamma$ and $e_{ij}$ for the edges $\{V_i,V_j\}\in E(\Gamma)$, whose lengths are denoted by $l_{ij}$. Moreover, we can suppose that for $j=1,\dots,k$, we have $\g(V_j)=D_j$, where $D_1,\dots,D_k$ are the Dirichlet points from problem \eqref{enim} and so, $\{V_1,\dots,V_k\}\subset\V$. Let $w=(w_{ij})_{ij}$ be the energy function on $\Gamma$ with Dirichlet conditions in the points of $\V$.

\begin{enumerate}
\item Suppose that we can remove an edge $e_{ij}\in E(\Gamma)$, such that the graph $\Gamma'=(V(\Gamma),E(\Gamma)\setminus e_{ij})$ is still connected. Since $w_{ij}''=-1$ on $[0,l_{ij}]$ we have that at least one of the derivatives $w_{ij}'(0)$ and $w_{ij}'(l_{ij})$ is not zero. We can suppose that $w_{ij}'(l_{ij})\neq0$. Consider the new graph $\widetilde\Gamma$ to which we add a new vertex: $V(\widetilde\Gamma)=V(\Gamma)\cup V_0$, then erase the edge $e_{ij}$ and create a new one $e_{i0}=\{V_i,V_0\}$, of the same length, connecting $V_i$ to $V_0$: $E(\widetilde\Gamma)=\left(E(\Gamma)\setminus e_{ij}\right)\cup e_{i0}$. Let $\widetilde w$ be the energy function on $\tilde\Gamma$ with Dirichlet conditions in $\V$. When seen as a subspaces of $\oplus_{ij}H^1([0,l_{ij}])$, we have that $H^1_0(\Gamma;\V)\subset H^1_0(\widetilde\Gamma;\V)$ and so $\E(\widetilde\Gamma;\V)\le\E(\Gamma;\V)$, where the equality occurs, if and only if the energy functions $w$ and $\widetilde w$ have the same components in $\oplus_{ij}H^1([0,l_{ij}])$. In particular, we must have that $w_{ij}=\widetilde{w}_{i0}$ on the interval $[0,l_{ij}]$, which is impossible since $w_{ij}'(l_{ij})\neq0$ and $\widetilde{w}_{i0}'(l_{ij})=0$.

\item Suppose that there is a vertex $V_j\in\V$ with $j>k$ and let $\widetilde w$ be the energy function on $\Gamma$ with Dirichlet conditions in $\{V_1,\dots,V_k\}$. We have the inclusion $H^1_0(\Gamma;\V)\subset H^1_0(\Gamma;\{V_1,\dots,V_k\})$ and so, the inequality $J(\widetilde w)=\E(\Gamma;\{V_1,\dots,V_k\})\le \E(\Gamma;\V)=J(w)$, which becomes an equality if and only if $\widetilde w=w$, which is impossible. Indeed, if the equality holds, then in $V_j$, $w$ satisfies both the Dirichlet condition and the Kirchoff's law. Since $w$ is positive, for any edge $e_{ji}$ we must have $w_{ji}(0)=0$, $w_{ji}'(0)=0$, $w_{ji}''=-1$ ad $w_{ji}\ge0$ on $[0,l_{ji}]$, which is impossible. 

\item Suppose that there are two vertices $V_i$ and $V_j$ of degree one, which are not in $\V$, i.e. $i,j>k$. Since $\Gamma$ is connected, there are two edges, $e_{ii'}$ and $e_{jj'}$ starting from $V_i$ and $V_j$ respectively. Suppose that the energy function $w\in H^1_0(\Gamma;\{V_1,\dots,V_k\})$ is such that $w(V_i)\ge w(V_j)$. We define a new graph $\tilde\Gamma$ by erasing the edge $e_{jj'}$ and creating the edge $e_{ij}$ of length $l_{jj'}$. On the new edge $e_{ij}$ we consider the function $w_{ij}(x)=w_{jj'}(x)+w(V_i)-w(V_j)$. The function $\widetilde w$ on $\widetilde\Gamma$ obtained by this construction is such that $J(\widetilde w)\le J(w)$, which proves the conclusion.
\end{enumerate}
The points $(iv)$ and $(v)$ follow by the construction in Theorem \ref{th2} and the previous claims $(i)$, $(ii)$ and $(iii)$.
\end{proof}

\begin{oss}
Suppose that $V_j\in V(\Gamma)\setminus\V$ is a vertex of degree one and let $V_i$ be the vertex such that $e_{ij}\in E(\Gamma)$. Then the energy function $w$ with Dirichlet conditions in $\V$ satisfies $w_{ji}'(0)=0$. In this case, we call $V_j$ a Neumann vertex. By Proposition \ref{oc}, an optimal graph has at most one Neumann vertex. 
\end{oss}

\section{Some examples of optimal metric graphs}\label{s5}

In this section we show three examples. In the first one we deal with two Dirichlet points, the second concerns three aligned Dirichlet points and the third one deals with the case in which the Dirichlet points are vertices of an equilateral triangle. In the first and the third one we find the minimizer explicitly as an embedded graph, while in the second one we limit ourselves to prove that there is no embedded minimizer of the energy, i.e. the problem \eqref{enem} does not admit a solution.

In the following example we use a symmetrization technique similar to the one from Remark \ref{ps}.

\begin{exam}\label{example2}
Let $D_1$ and $D_2$ be two distinct points in $\R^d$ and let $l\ge|D_1-D_2|$ be a real number. Then the problem
\be\label{enim2}
\begin{array}{ll}
\min\lbrace\E(\Gamma;\{V_1,V_2\}): \Gamma\in CMG,\ l(\Gamma)=l,\ V_1,V_2\in V(\Gamma),\\
\qquad\qquad\qquad \hbox{exists}\ \gamma:\Gamma\to\R\ \hbox{immersion},\ \g(V_1)=D_1, \g(V_2)=D_2\rbrace.
\end{array}
\ee
has a solution $(\Gamma,\gamma)$, where $\Gamma$ is a metric graph with vertices $V(\Gamma)=\{V_1,V_2,V_3,V_4\}$ and edges $E(\Gamma)=\{e_{13}=\{V_1,V_3\},e_{23}=\{V_2,V_3\},e_{43}=\{V_4,V_3\}\}$ of lengths $l_{13}=l_{23}=\frac12|D_1-D_2|$ and $l_{34}=l-|D_1-D_2|$, respectively. The map $\gamma:\Gamma\to\R^d$ is an embedding such that $\gamma(V_1)=D_1$, $\gamma(V_2)=D_2$ and $\gamma(V_3)=\frac{D_1+D_2}{2}$ (see Figure \ref{f1ex2}).

\begin{figure}[h]
\centerline{\includegraphics[width=10.0cm]{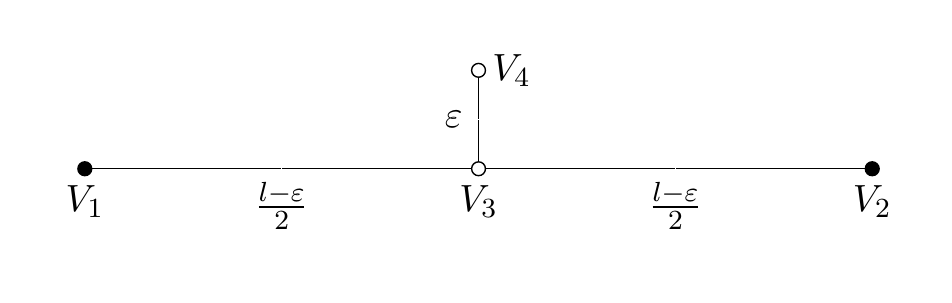}}
\caption{The optimal graph with two Dirichlet points.}
\label{f1ex2}
\end{figure}

To fix the notations, we suppose that $|D_1-D_2|=l-\varepsilon$. Let $u=(u_{ij})_{ij}$ be the energy function of a generic metric graph $\Sigma$ and immersion $\sigma:\Sigma\to\R^d$ with $D_1,D_2\in\sigma(V(\Sigma))$. Let $M=\max\{u(x):\,x\in\Sigma\}>0$. We construct a candidate $v\in H^1_0(\Gamma;\{V_1,V_2\})$ such that $J(v)\le J(u)$, which immediately gives the conclusion.

We define $v$ by the following three \emph{increasing} functions
$$v_{13}=v_{23}\in H^1([0,(l-\varepsilon)/2]),\qquad v_{34}\in H^1([0,\varepsilon]),$$
with boundary values
$$v_{13}(0)=v_{23}(0)=0,\qquad v_{13}((l-\varepsilon)/2)=v_{23}((l-\varepsilon)/2)=v_{34}(0)=m<M,$$
and level sets uniquely determined by the equality $\ \mu_u=\mu_v$, where $\mu_u$ and $\mu_v$ are the distribution functions of $u$ and $v$ respectively, defined by
\begin{align*}
&\mu_u(t)=\H^1(\{u\le t\})=\sum_{e_{ij}\in E(\Sigma)}\H^1(\{u_{ij}\le t\}),\\
&\mu_v(t)=\H^1(\{v\le t\})=\sum_{j=1,2,4}\H^1(\{v_{j3}\le t\}).
\end{align*}
As in Remark \ref{ps} we have $\|v\|_{L^1(\Gamma)}=\|u\|_{L^1(C)}$ and
\be\label{e1l3}
\int_\Sigma |u'|^2\,dx
=\int_0^M\!\!\!\Big(\sum_{u=\tau}|u'|\Big)\,d\tau
\ge\int_0^M\!\!n^2_u(\tau)\left(\sum_{u=\tau}\frac{1}{|u'|(\tau)}\right)^{-1}\!\!\!d\tau
=\int_0^M\frac{n_u^2(\tau)}{\mu_u'(\tau)}\,d\tau
\ee
where $n_u(\tau)=\H^0(\{u=\tau\})$. The same argument holds for $v$ on the graph $\Gamma$ but, this time, with the equality sign:
\be\label{e3l3}
\int_\Gamma|v'|^2dx=\int_0^M\!\!\Big(\sum_{v=\tau}|v'|\Big)\,d\tau=\int_0^M\frac{n_v^2(\tau)}{\mu_v'(\tau)}\,d\tau,
\ee
since $|v'|$ is constant on $\{v=\tau\}$, for every $\tau$.
Then, in view of \eqref{e1l3} and \eqref{e3l3}, to conclude it is enough to prove that $n_u(\tau)\ge n_v(\tau)$ for almost every $\tau$. To this aim we first notice that, by construction $n_v(\tau)=1$ if $\tau\in [m,M]$ and $n_v(\tau)=2$ if $\tau\in [0,m)$. Since $n_u$ is decreasing and greater than $1$ on $[0,M]$, we only need to prove that $n_u\ge2$ on $[0,m]$. To see this, consider two vertices $W_1,W_2\in V(\Sigma)$ such that $\sigma(W_1)=D_1$ and $\sigma(W_2)=D_2$. Let $\eta$ be a simple path connecting $W_1$ to $W_2$ in $\Sigma$. Since $\sigma$ is an immersion we know that the length $l(\eta)$ of $\eta$ is at least $l-\varepsilon$. By the continuity of $u$, we know that $n_u\ge 2$ on the interval $[0,\max_\eta u)$. Since $n_v=1$ on $[m,M]$, we need to show that $\max_\eta u\ge m$. Otherwise, we would have 
$$l(\eta)\le |\{u\le \max_\eta u\}|<|\{u\le m\}|=|\{v\le m\}|=|D_1-D_2|\le l(\eta),$$
which is impossible.
\end{exam}

\begin{oss}\label{max}
In the previous example the optimal metric graph $\Gamma$ is such that for any (admissible) immersion $\gamma:\Gamma\to\R^d$, we have $|\g(V_1)-\g(V_3)|=l_{13}$ and $|\g(V_2)-\g(V_3)|=l_{23}$, i.e. the point $\g(V_3)$ is necessary the midpoint $\frac{D_1+D_2}{2}$, so we have a sort of \emph{rigidity} of the graph $\Gamma$. More generally, we say that an edge $e_{ij}$ is \emph{rigid}, if for any admissible immersion $\g:\Gamma\to\R^d$, i.e. an immersion such that $\D=\gamma(\V)$, we have $|\g(V_i)-\g(V_j)|=l_{ij}$, in other words the realization of the edge $e_{ij}$ in $\R^d$ via any immersion $\g$ is a segment. One may expect that in the optimal graph all the edges, except the one containing the Neumann vertex, are rigid. Unfortunately, we are able to prove only the weaker result that:
\begin{enumerate}
\item if the energy function $w$, of an optimal metric graph $\Gamma$, has a local maximum in the interior of an edge $e_{ij}$, then the edge is rigid; if the maximum is global, then $\Gamma$ has no Neumann vertices;
\item if $\Gamma$ contains a Neumann vertex $V_j$, then $w$ achieves its maximum at it.
\end{enumerate} 
To prove the second claim, we just observe that if it is not the case, then we can use an argument similar to the one from point $(iii)$ of Proposition \ref{oc}, erasing the edge $e_{ij}$ containing the Neumann vertex $V_j$ and creating an edge of the same length that connects $V_j$ to the point, where $w$ achieves its maximum, which we may assume a vertex of $\Gamma$ (possibly of degree two).

For the first claim, we apply a different construction which involves a symmetrization technique. In fact, if the edge $e_{ij}$ is not rigid, then we can create a new metric graph of smaller energy, for which there is still an immersion which satisfies the conditions in problem \eqref{enim}. In this there are points $0<a<b<l_{ij}$ such that $l_{ij}-(b-a)\ge |\gamma(V_i)-\gamma(V_j)|$ and $\min_{[a,b]}w_{ij}=w_{ij}(a)=w_{ij}(b)<\max_{[a,b]}w_{ij}$. Since the edge is not rigid, there is an immersion $\gamma$ such that $|\g_{ij}(a)-\g_{ij}(b)|>|b-a|$. The problem \eqref{enim2} with $D_1=\gamma_{ij}(a)$ and $D_2=\gamma_{ij}(b)$ has as a solution the $T$-like graph described in Example \ref{example2}. This shows, that the original graph could not be optimal, which is a contradiction.
\end{oss}

\begin{exam}\label{example1}
Consider the set of points $\D=\{D_1, D_2, D_3\}\subset\R^2$ with coordinates respectively $(-1,0)$, $(1,0)$ and $(n,0)$, where $n$ is a positive integer. Given $l=(n+2)$, we aim to show that for $n$ large enough there is no solution of the optimization problem 
\be
\min\left\{\E(\Gamma;\V): \Gamma\in CMG,\ l(\Gamma)=l,\ \V\subset V(\Gamma),\ \exists\gamma:\Gamma\to\R\ \hbox{embedding},\ \D=\g(\V)\right\}.
\ee
In fact, we show that all the possible solutions of the problem 
\be\label{enim3}
\min\left\{\E(\Gamma;\V): \Gamma\in CMG,\ l(\Gamma)=l,\ \V\subset V(\Gamma),\ \exists\gamma:\Gamma\to\R\ \hbox{immersion},\ \D=\g(\V)\right\}
\ee
are metric graphs $\Gamma$ for which there is no embedding $\gamma:\Gamma\to\R^2$ such that $\D\subset\gamma(V(\Gamma))$. Moreover, there is a sequence of embedded metric graphs which is a minimizing sequence for the problem \eqref{enim3}.

More precisely, we show that the only possible solution of \eqref{enim3} is one of the following metric trees:
\begin{enumerate}[(i)]
\item $\Gamma_1$ with vertices $V(\Gamma_1)=\{V_1,V_2,V_3,V_4\}$ and edges $E(\Gamma_1)=(e_{14}=\{V_1,V_4\},e_{24}=\{V_2,V_4\},e_{34}=\{V_3,V_4\}$ of lengths $l_{14}=l_{24}=1$ and $l_{34}=n$, respectively. The set of vertices in which the Dirichlet condition holds is $\V_1=\{V_1,V_2,V_3\}$. 
\item $\Gamma_2$ with vertices $V(\Gamma_2)=\{W_i\}_{i=1}^6$, and edges $E(\Gamma_2)=\{e_{14},e_{24},e_{35},e_{45},e_{56}\}$ ,where $e_{ij}=\{W_i,W_j\}$ for $1\le i\ne j\le6$ of lengths $l_{14}=1+\alpha,\,l_{24}=1-\alpha,\,l_{35}=n-\beta,\,l_{45}=\beta-\alpha$, $l_{56}=\alpha$, where $0<\alpha<1$ and $\alpha<\beta<n$. The set of vertices in which the Dirichlet condition holds is $\V_1=\{V_1,V_2,V_3\}$. A possible immersion $\gamma$ is described in Figure \ref{f1ex1}.
\end{enumerate}

\begin{figure}[h]
\centerline{\includegraphics[width=14.0cm]{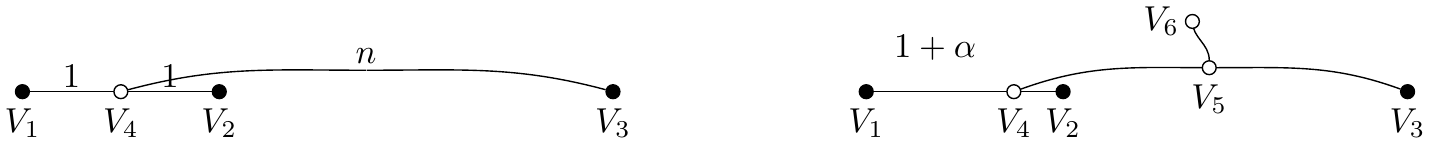}}
\caption{The two candidates for a solution of \eqref{enim3}.}
\label{f1ex1}
\end{figure}

We start showing that if there is an optimal metric graph with no Neumann vertex, then it must be $\Gamma_1$. In fact, by Proposition \ref{oc}, we know that the optimal metric graph is of the form $\Gamma_1$, but we have no information on the lengths of the edges, which we set as $l_i=l(e_{i4})$, for $i=1,2,3$ (see Figure \ref{f2ex1}). We can calculate explicitly the minimizer of the energy functional and the energy itself in function of $l_1$, $l_2$ and $l_3$. 

\begin{figure}[h]
\centerline{\includegraphics[width=7.0cm]{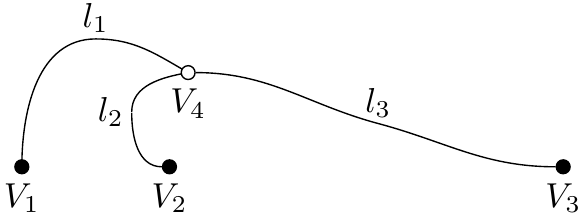}}
\caption{A metric tree with the same topology as $\Gamma_1$.}
\label{f2ex1}
\end{figure}

The minimizer of the energy $w:\Gamma\to\R$ is given by the functions $w_i:[0,l_i]\to\R$, where $i=1,2,3$ and
\be
w_i(x)=-\frac{x^2}{2}+a_ix.
\ee
where 
\be
a_1=\frac{l_1}{2}+\frac{l_2l_3(l_1+l_2+l_3)}{2(l_1l_2+l_2l_3+l_3l_1)},
\ee
and $a_2$ and $a_3$ are defined by a cyclic permutation of the indices. As a consequence, we obtain that the derivative along the edge $e_{14}$ in the vertex $V_4$ is given by
\be\label{der}
w_1'(l_1)=-l_1+a_1=-\frac{l_1}{2}+\frac{l_2l_3(l_1+l_2+l_3)}{2(l_1l_2+l_2l_3+l_3l_1)},
\ee
and integrating the energy function $w$ on $\Gamma$, we obtain
\be
\E(\Gamma;\{V_1,V_2,V_3\})=-\frac{1}{12}(l_1^3+l_2^3+l_3^3)-\frac{(l_1+l_2+l_3)^2 l_1l_2l_3}{4(l_1l_2+l_2l_3+l_3l_1)}.
\ee
Studying this function using Lagrange multipliers is somehow complicated due to the complexity of its domain. Thus we use a more geometric approach applying the symmetrization technique described in Remark \ref{ps} in order to select the possible candidates. We prove that if the graph is optimal, then all the edges must be rigid (this would force the graph to coincide with $\Gamma_1$). Suppose that the optimal graph $\Gamma$ is not rigid, i.e. there is a non-rigid edge. Then, for $n>4$, we have that $l_2<l_1<l_3$ and so, by \eqref{der}, we obtain $w_3'(l_3)< w_1'(l_1)< w_2'(l_2)$. As a consequence of the Kirchoff's law we have $w_3'(l_3)<0$ and $w_2'(l_2)>0$ and so, $w$ has a local maximum on the edge $e_{34}$ and is increasing on $e_{14}$. By Remark \ref{max}, we obtain that the edge $e_{34}$ is rigid.

We first prove that $w_1'(l_1)>0$. In fact, if this is not the case, i.e. $w_1'(l_1)<0$, by Remark \ref{max}, we have that the edges $e_{14}$ is also rigid and so, $l_1+l_3=|D_1-D_3|=n+1$, i.e. $l_2=1$. Moreover, by \eqref{der}, we have that $w_1'(l_1)<0$, if and only if $l_1^2>l_2l_3=l_3$. The last inequality does not hold for $n>11$, since, by the triangle inequality, $l_2+l_3\ge|D_2-D_3|=n-1$, we have $l_1\le 3$. Thus, for $n$ large enough, we have that $w$ is increasing on the edge $e_{14}$.

We now prove that the edges $e_{14}$ and $e_{24}$ are rigid. In fact, suppose that $e_{24}$ is not rigid. Let $a\in (0,l_1)$ and $b\in(0,l_2)$ be two points close to $l_1$ and $l_2$ respectively and such that $w_{14}(a)=w_{24}(b)<w(V_4)$ since $w_{14}$ and $w_{24}$ are strictly increasing. Consider the metric graph $\widetilde\Gamma$ whose vertices and edges are $$V(\widetilde\Gamma)=\{V_1=\widetilde{V}_1,\,V_2=\widetilde{V}_2,\,V_3=\widetilde{V}_3,\,V_4=\widetilde{V}_4,\,\widetilde{V}_5,\,\widetilde{V}_6\},$$ 
$$E(\widetilde\Gamma)=\{e_{15},\,e_{25},\,e_{45},\,e_{34},\,e_{46}\},$$
where $e_{ij}=\{\widetilde{V}_i,\widetilde{V}_j\}$ and the lengths of the edges are respectively (see Figure \ref{f3ex1})
$$\widetilde{l}_{15}=a,\ \widetilde{l}_{25}=b,\ \widetilde{l}_{45}=l_2-b,\ \widetilde{l}_{34}=l_{3},\ \widetilde{l}_{46}=l_1-a.$$

\begin{figure}[h]
\centerline{\includegraphics[width=14.0cm]{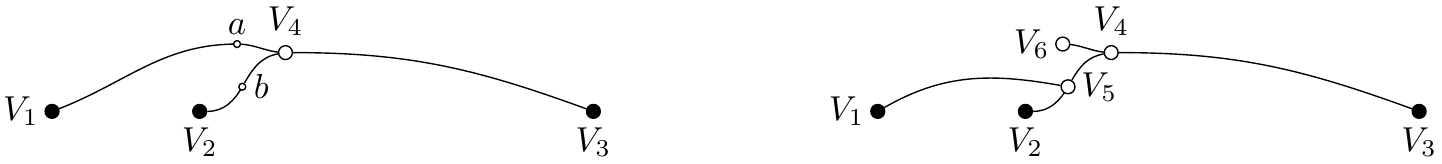}}
\caption{The graph $\Gamma$ (on the left) and the modified one $\widetilde\Gamma$ (on the right).}
\label{f3ex1}
\end{figure}

The new metric graph is still a competitor in the problem \eqref{enim3} and there is a function $w\in H^1_0(\widetilde\Gamma;\{V_1,V_2,V_3\})$ such that $\E(\widetilde\Gamma;\{V_1,V_2,V_3\})< J(\widetilde w)=J(w)$, which is a contradiction with the optimality of $\Gamma$. In fact, it is enough to define $\widetilde w$ as 
$$\widetilde{w}_{15}=w_{14}\vert_{[0,a]},\ \widetilde{w}_{25}=w_{24}\vert_{[0,b]},\ \widetilde{w}_{54}=w_{24}\vert_{[b,l_2]},\ \widetilde{w}_{34}=w_{34},\ \widetilde{w}_{64}=w_{14}\vert_{[a,l_1]},$$
and observe that $\widetilde w$ is not the energy function on the graph $\widetilde\Gamma$ since it does not satisfy the Neumann condition in $\widetilde{V}_6$. In the same way, if we suppose that $w_{14}$ is not rigid, we obtain a contradiction, and so all the three edges must be rigid, i.e. $\Gamma=\Gamma_1$.

In a similar way we prove that a metric graph $\Gamma$ with a Neumann vertex can be a solution of \eqref{enim3} only if it is of the same form as $\Gamma_2$. We proceed in two steps: first, we show that, for $n$ large enough, the edge containing the Neumann vertex has a common vertex with the longest edge of the graph; then we can conclude reasoning analogously to the previous case. Let $\Gamma$ be a metric graph with vertices $V(\Gamma)=\{V_i\}_{i=1}^6$, and edges $E(\Gamma)=\{e_{15},e_{24},e_{34},e_{45},e_{56}\}$, where $e_{ij}=\{V_i,V_j\}$ for $1\le i\neq j\le6$.

We prove that $w(V_6)\le \max_{e_{34}}w$, i.e. the graph $\Gamma$ is not optimal, since, by Remark \ref{max}, the maximum of $w$ must be achieved in the Neumann vertex $V_6$ (the case $E(\Gamma)=\{e_{14},e_{25},e_{34},e_{45},e_{56}\}$ is analogous). Let $w_{15}:[0,l_{15}]\to\R$, $w_{65}:[0,l_{65}]\to\R$ and $w_{34}:[0,l_{34}]\to\R$ be the restrictions of the energy function $w$ of $\Gamma$ to the edges $e_{15}$, $e_{65}$ and $e_{34}$ of lengths $l_{15}$, $l_{65}$ and $l_{34}$, respectively. Let $u:[0,l_{15}+l_{56}]\to\R$ be defined as
\be
u(x)=\begin{cases}
w_{15}(x),\ x\in[0,l_{15}],\\
w_{56}(x-l_{15}),\ x\in [l_{15}.l_{15}+l_{56}].
\end{cases}
\ee
If the metric graph $\Gamma$ is optimal, then the energy function on $w_{54}$ on the edge $e_{45}$ must be decreasing and so, by the Kirchhoff's law in the vertex $V_5$, we have that $w_{15}'(l_{15})+w_{65}'(l_{65})\le 0$, i.e. the left derivative of $u$ at $l_{15}$ is less than the right one:
$$\partial_- u(l_{15})=w_{15}'(l_{15})\le w_{56}'(0)=\partial_+u(l_{15}).$$
By the maximum principle, we have that 
$$u(x)\le\widetilde{u}(x)=-\frac{x^2}{2}+(l_{15}+l_{56})x\le \frac12(l_{15}+l_{56})^2.$$
On the other hand, $w_{34}(x)\ge v(x)=-\frac{x^2}{2}+\frac{l_{34}}{2}x$, again by the maximum principle on the interval $[0,l_{34}]$. Thus we have that 
$$\max_{x\in[0,l_{34}]}\,w_{34}(x)\ge \max_{x\in[0,l_{34}]}\,v(x)=\frac18l_{34}^2>\frac12(l_{15}+l_{56})^2\ge w(V_6),$$
for $n$ large enough.

Repeating the same argument, one can show that the optimal metric graph $\Gamma$ is not of the form $V(\Gamma)=(V_1,V_2,V_3,V_4,V_5)$, $E(\Gamma)=\{V_1,V_4\}, \{V_2,V_4\}, \{V_3,V_4\}, \{V_4,V_5\}$.

Thus, we obtained that the if the optimal graph has a Neumann vertex, then the corresponding edge must be attached to the longest edge. To prove that it is of the same form as $\Gamma_2$, there is one more case to exclude, namely: $\Gamma$ with vertices, $V(\Gamma)=(V_1,V_2,V_3,V_4,V_5)$, $E(\Gamma)=\left\{\{V_1,V_2\}, \{V_2,V_4\}, \{V_3,V_4\}, \{V_4,V_5\}\right\}$ (see Figure \ref{f4ex1}). By Example \ref{example2}, the only possible candidate of this form is the graph with lengths $l(\{V_1,V_2\})=|D_1-D_2|=2$, $l(\{V_2,V_4\})=\frac{n-1}{2}$, $l(\{V_3,V_4\})=\frac{n-1}{2}$, $l(\{V_4,V_5\})=2$. In this case, we compare the energy of $\Gamma$ and $\Gamma_1$, by an explicit calculation:
\be
\E(\Gamma;\{V_1,V_2,V_3\})=-\frac{n^3-3n^2+6n}{24}>-\frac{n^2(n+1)^2}{12(2n+1)}=\E(\Gamma_1;\{V_1,V_2,V_3\}),
\ee
for $n$ large enough. 
\end{exam}

\begin{figure}[h]
\centerline{\includegraphics[width=15.0cm]{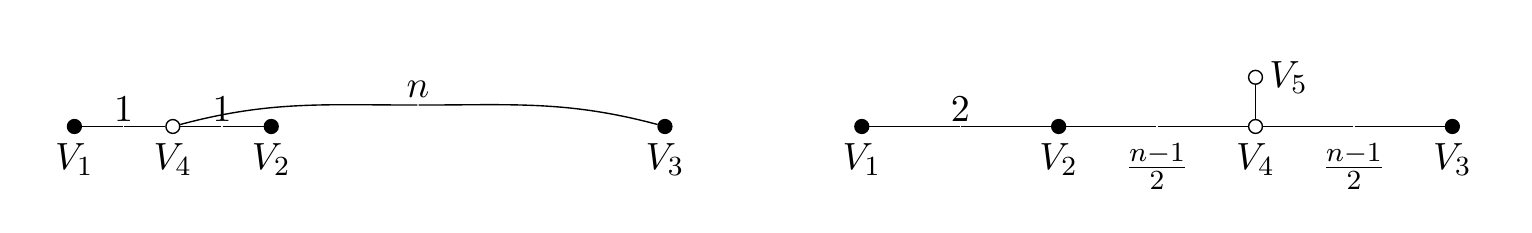}}
\caption{The graph $\Gamma_1$ (on the left) has lower energy than the graph $\Gamma$ (on the right).}
\label{f4ex1}
\end{figure}

Before we pass to our last example, we need the following Lemma.

\begin{lemma}\label{att}
Let $w_a:[0,1]\to\R$ be given by $w_a(x)=-\frac{x^2}{2}+ax$, for some positive real number $a$. If $w_a(1)\le w_A(1)\le \max_{x\in[0,1]}\,w_a(x)$, then $J(w_A)\le J(w_a)$, where $J(w)=\frac{1}{2}\int_0^1|w'|^2\,dx-\int_0^1w\,dx.$ 
\end{lemma}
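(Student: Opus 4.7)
The plan is to prove the lemma by direct computation, since both $w_a$ and $w_A$ are explicit quadratic polynomials. First I will evaluate $J$ on a generic member of this one-parameter family.

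A routine calculation gives $w_a'(x)=-x+a$, so
\[
\tfrac12\int_0^1|w_a'|^2\,dx=\tfrac12\int_0^1(x-a)^2\,dx=\tfrac{1}{6}-\tfrac{a}{2}+\tfrac{a^2}{2},
\qquad \int_0^1 w_a\,dx=-\tfrac{1}{6}+\tfrac{a}{2},
\]
so that $J(w_a)=\tfrac{a^2}{2}-a+\tfrac{1}{3}$, and analogously $J(w_A)=\tfrac{A^2}{2}-A+\tfrac{1}{3}$. Therefore
\[
J(w_A)-J(w_a)=\tfrac{1}{2}(A^2-a^2)-(A-a)=(A-a)\Bigl(\tfrac{A+a}{2}-1\Bigr).
\]
Hence the conclusion $J(w_A)\le J(w_a)$ is equivalent to showing that the two factors $A-a$ and $\tfrac{A+a}{2}-1$ have opposite signs (or one of them vanishes).

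Next I will translate the hypothesis $w_a(1)\le w_A(1)\le \max_{[0,1]}w_a$ into bounds on $A$. Since $w_a(1)=a-\tfrac12$ and $w_A(1)=A-\tfrac12$, the left inequality is just $a\le A$, so $A-a\ge 0$. For the right inequality there are two cases depending on the location of the maximum of $w_a$ on $[0,1]$. If $a\ge 1$, the maximum of $w_a$ is attained at $x=1$ and equals $w_a(1)$, so the two bounds collapse to $A=a$ and the statement is trivial. If $a<1$, the maximum equals $\tfrac{a^2}{2}$, so the upper bound becomes $A\le \tfrac{a^2+1}{2}$.

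The final step is then to check the elementary inequality $\tfrac{a^2+1}{2}\le 2-a$ for $a\in(0,1)$, which is equivalent to $(a-1)(a+3)\le 0$ and is therefore immediate. This yields $A+a\le 2$, so $\tfrac{A+a}{2}-1\le 0$, and combined with $A-a\ge 0$ gives $J(w_A)-J(w_a)\le 0$. There is no real obstacle in this argument; the only thing to be careful about is the case split at $a=1$, which is why the hypothesis is phrased via $\max_{[0,1]}w_a$ rather than directly in terms of $a$.
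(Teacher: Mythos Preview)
Your proof is correct and is exactly the explicit computation the paper alludes to; the paper's own proof consists only of the sentence ``It follows by performing the explicit calculations.'' Your handling of the case split at $a=1$ and the factorization $(A-a)\bigl(\tfrac{A+a}{2}-1\bigr)$ is precisely the right way to organize it.
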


\begin{proof}
It follows by performing the explicit calculations.
\end{proof}

\begin{exam}\label{example3}
Let $D_1$, $D_2$ and $D_3$ be the vertices of an equilateral triangle of side $1$ in $\R^2$, i.e.
$$D_1=(-\frac{\sqrt3}{3},0),\ D_2=(\frac{\sqrt3}{6},-\frac12),\ D_3=(\frac{\sqrt3}{6},\frac12).$$
We study the problem \eqref{enim} with $\D=\{D_1,D_2,D_3\}$ and $l>\sqrt 3$. We show that the solutions may have different qualitative properties for different $l$ and that there is always a symmetry breaking phenomena, i.e. the solutions does not have the same symmetries as the initial configuration $\D$. We first reduce our study to the following three candidates (see Figure \ref{f1ex3}):

\begin{enumerate}
\item The metric tree $\Gamma_1$, defined by with vertices $V(\Gamma)=\{V_1,V_2,V_3,V_4\}$ and edges $E(\Gamma)=\{e_{14}, e_{24}, e_{34}\}$, where $e_{ij}=\{V_i,V_j\}$ and the lengths of the edges are respectively $l_{24}=l_{34}=x$, $l_{14}=\frac{\sqrt3}{2}-\sqrt{x^2-\frac14}$, for some $x\in [1/2,1/\sqrt3]$. Note that the length of $\Gamma_1$ is less than $1+\sqrt3/2$, i.e. it is a possible solution only for $l\le 1+\sqrt3/2$. The new vertex $V_4$ is of Kirchhoff type and there are no Neumann vertices.

\item The metric tree $\Gamma_2$ with vertices $V=(V_1,V_2,V_3,V_4,V_5)$ and $E(\Gamma)=\{e_{14}, e_{24}, e_{34},e_{45}\}$, where $e_{ij}=\{V_i,V_j\}$ and the lengths of the edges $l_{14}=l_{24}=l_{34}=1/\sqrt3$, $l_{45}=l-\sqrt3$, respectively. The new vertex $V_4$ is of Kirchhoff type and $V_5$ is a Neumann vertex.

\item The metric tree $\Gamma_3$ with vertices $V(\Gamma)=\{V_1,V_2,V_3,V_4,V_5,V_6\}$ and edges $E(\Gamma)=\{e_{15},e_{24},e_{34},e_{45},e_{56}\}$, where $e_{ij}=\{V_i,V_j\}$ and the lengths of the edges are $l_{24}=l_{34}=x$, $l_{15}=\frac{lx}{2(2l-3x)}+\frac{\sqrt3}{4}-\frac14\sqrt{4x^2-1}$, $l_{45}=\frac{\sqrt3}{4}-\frac{lx}{2(2l-3x)}-\frac14\sqrt{4x^2-1}$ and $l_{56}=l-2x-\sqrt3/2+\frac12\sqrt{4x^2-1}$. The new vertices $V_4$ and $V_5$ are of Kirchhoff type and $V_6$ is a Neumann vertex.
\end{enumerate}

\begin{figure}[h]
\centerline{\includegraphics[width=14.0cm]{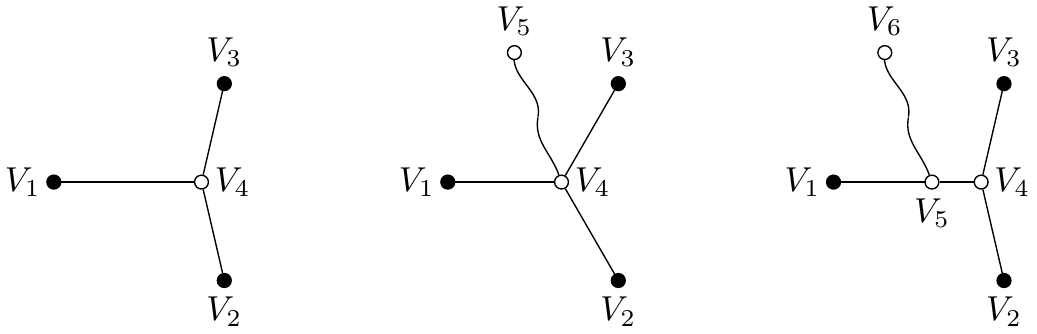}}
\caption{The three competing graphs.}
\label{f1ex3}
\end{figure}

Suppose that the metric graph $\Gamma$ is optimal and has the same vertices and edges as $\Gamma_1$. Without loss of generality, we can suppose that the maximum of the energy function $w$ on $\Gamma$ is achieved on the edge $e_{14}$. If $l_{24}\neq l_{34}$, we consider the metric graph $\widetilde\Gamma$ with the same vertices and edges as $\Gamma$ and lengths $\widetilde l_{14}=l_{14}$, $\widetilde l_{24}=\widetilde l_{34}=(l_{24}+l_{34})/2$. An immersion $\widetilde\g:\widetilde\Gamma\to\R^2$, such that $\widetilde\g(V_j)=D_j$, for $j=1,2,3$ still exists and the energy decreases, i.e. $\E(\widetilde\Gamma;\{V_1,V_2,V_3\})<\E(\Gamma;\{V_1,V_2,V_3\})$. In fact, let $v=\widetilde w_{24}=\widetilde w_{34}:[0,\frac{l_{24}+l_{34}}{2}]\to\R$ be an increasing function such that $2|\{v\ge \tau\}|=|\{w_{24}\ge\tau\}|+|\{w_{34}\ge\tau\}|$. By the classical Polya-Szego inequality and by the fact that $w_{24}$ and $w_{34}$ have no constancy regions, we obtain that
$$J(\widetilde w_{24})+J(\widetilde w_{34})<J(w_{24})+J(w_{34}),$$
and so it is enough to construct a function $\widetilde w_{14}:[0,l_{14}]\to\R$ such that $\widetilde w_{14}(l_{14})=\widetilde w_{24}=\widetilde w_{34}$ and $J(\widetilde w_{14})\le J(w_{14})$. Consider a function such that $\widetilde w_{14}''=-1$, $\widetilde w_{14}(0)=0$ and $\widetilde w_{14}(l_{14})=\widetilde w_{24}(l_24)=\widetilde w_{34}(l_{34})$. Since we have the inequality $w_{14}(l_{14})\le\widetilde w_{14}(l_{14})\le\max_{[0,l_{14}]}\,w_{14}=\max_{\Gamma}\,w$, we can apply Lemma \ref{att} and so, $J(\widetilde w_{14})\le J(w_{14})$. Thus, we obtain that $l_{24}=l_{34}$ and that both the functions $w_{24}$ and $w_{34}$ are increasing (in particular, $l_{14}\ge l_{24}=l_{34}$). If the maximum of $w$ is achieved in the interior of the edge $e_{14}$ then, by Remark \ref{max}, the edge $e_{14}$ must be rigid and so, all the edges must be rigid. Thus, $\Gamma$ coincides with $\Gamma_1$ for some $x\in (\frac12,\frac{1}{\sqrt3}]$. If the maximum of $w$ is achieved in the vertex $V_4$, then applying one more time the above argument, we obtain $l_{14}=l_{24}=l_{34}=\frac{1}{\sqrt3}$, i.e. $\Gamma$ is $\Gamma_1$ corresponding to $x=\frac{1}{\sqrt3}$.

Suppose that the metric graph $\Gamma$ is optimal and that has the same vertices as $\Gamma_2$. If $w=(w_{ij})_{ij}$ is the energy function on $\Gamma$ with Dirichlet conditions in $\{V_1,V_2,V_3\}$, we have that $w_{14},w_{24}$ an $w_{34}$ are increasing on the edges $e_{14}, e_{24}$ and $e_{34}$. As in the previous situation $\Gamma=\Gamma_1$, by a symmetrization argument, we have that $l_{14}=l_{24}=l_{34}$. Since any level set $\{w=\tau\}$ contains exactly $3$ points, if $\tau<w(V_4)$, and $1$ point, if $\tau\ge w(V_4)$, we can apply the same technique as in Example \ref{example2} to obtain that $l_{14}=l_{24}=l_{34}=\frac{1}{\sqrt3}$.

Suppose that the metric graph $\Gamma$ is optimal and that has the same vertices and edges as $\Gamma_3$. Let $w$ be the energy function on $\Gamma$ with Dirichlet conditions in $\{V_1,V_2,V_3\}$. Since we assume $\Gamma$ optimal, we have that $w_{45}$ is increasing on the edge $e_{45}$ and $w(V_5)\ge w_{ij}$, for any $\{i,j\}\neq\{5,6\}$. Applying the symmetrization argument from the case $\Gamma=\Gamma_1$ and Lemma \ref{att}, we obtain that $l_{24}=l_{34}=x$ and that the functions $w_{24}=w_{34}$ are increasing on $[0,l_{24}]$. Let $a\in[0,l_{15}]$ be such that $w_{15}(a)=w(V_4)$. By a symmetrization argument, we have that necessarily $l_{15}-a=l_{45}$ an that $w_{45}(x)=w_{15}(x-a)$. Moreover, the edges $e_{15}$ and $e_{45}$ are rigid. Indeed, for any admissible immersion $\gamma=(\g_{ij})_{ij}:\Gamma\to\R^2$, we have that the graph $\widetilde\Gamma$ with vertices $V(\widetilde\Gamma)=\{\widetilde V_1,V_4,V_5,V_6\}$ and edges $E(\widetilde\Gamma)=\left\{\{\widetilde V_1,V_5\},\{V_4,V_5\},\{V_5,V_6\}\right\}$, is a solution for the problem \eqref{enim2} with $D_1:=\gamma_{15}(a)$ and $D_2:=\g(V_4)$. By Example \ref{example2} and Remark \ref{max}, we have $|\gamma_{15}(a)-\g(V_4)|=2l_{45}$ and, since this holds for every admissible $\g$, we deduce the rigidity of $e_{15}$ and $e_{45}$. Using this information one can calculate explicitly all the lengths of the edges of $\Gamma$ using only the parameter $x$, obtaining the third class of possible minimizers.

\begin{figure}[h]
\centerline{\includegraphics[width=14.0cm]{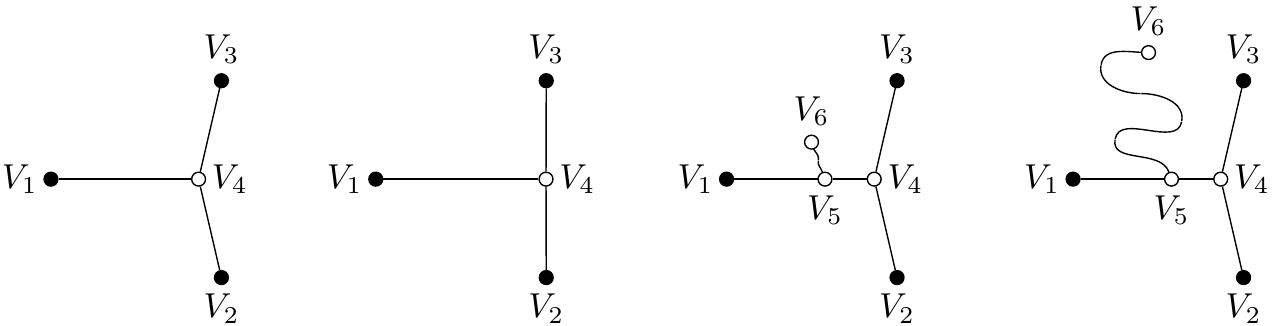}}
\caption{The optimal graphs for $l<1+\sqrt{3}/2$, $l=1+\sqrt3/2$, $l>1+\sqrt3/2$ and $l>>1+\sqrt3/2$.}
\label{f2ex3}
\end{figure}

An explicit estimate of the energy shows that:
\begin{enumerate}
\item If $\sqrt3\le l\le 1+\sqrt3/2$, we have that the solution of the problem \eqref{enim} with $\D=\{D_1,D_2,D_3\}$ is of the form $\Gamma_1$ (see Figure \ref{f2ex3}).
\item If $l>1+\sqrt3/2$, then the solution of the problem \eqref{enim} with $\D=\{D_1,D_2,D_3\}$ is of the form $\Gamma_3$.
\end{enumerate}

In both cases,the parameter $x$ is uniquely determined by the total length $l$ and so, we have uniqueness up to rotation on $\frac{2\pi}{3}$. Moreover, in both cases the solutions are metric graphs, for which there is an embedding $\gamma$ with $\gamma(V_i)=D_i$, i.e. they are also solutions of the problem \eqref{enem} with $\D=\{D_1,D_2,D_3\}$ and $l\ge\sqrt 3$.

\end{exam}

\section{Complements and further results}\label{s6}

In this Section we present two generalizations of Theorem \ref{th}. The first one deals with a more general class of constraints $D_1,\dots,D_k$ while in the second one we consider a larger class of admissible sets.\\

\begin{cor}
Let $D_1,\dots,D_k$ be $k$ disjoint compact sets in $\R^d$ and let $l\ge St(d_1,\dots,d_k)$, i.e. such that there exists a closed connected set $C$ of length $\H^1(C)=l$, which intersects all the sets $D_1,\dots,D_k$. Then the optimization problem
\be\label{enimcpt}
\min\left\{\E(\Gamma;\V):\ \Gamma\in CMG,\ l(\Gamma)=l, \V\subset V(\Gamma),\ \Gamma\in Adm(\V;D_1,\dots,D_k)\right\}
\ee 
admits a solution, where we say that $\Gamma\in Adm(\V;D_1,\dots,D_k)$, if there exists an immersion $\gamma:\Gamma\to\R^d$ such that for each $j=1,\dots,k$ there is $V_j\in\V$ such that $\g(V_j)\in D_j$.
\end{cor}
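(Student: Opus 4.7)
The plan is to adapt the proof of Theorem \ref{th} almost verbatim, replacing the pointwise constraint $\gamma(V_j)=D_j$ by the set constraint $\gamma(V_j)\in D_j$, and using the compactness of each $D_j$ to ensure the inclusion is preserved in the limit. The hypothesis $l\ge St(D_1,\dots,D_k)$ guarantees that the admissible class is non-empty.

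First I would pick a minimizing sequence $(\Gamma_n,\V_n,\gamma_n)$ with distinguished vertices $V_{1,n},\dots,V_{k,n}\in\V_n$ such that $P_{j,n}:=\gamma_n(V_{j,n})\in D_j$. Theorem \ref{th2} applies without change: its proof only uses that the distinguished vertices are mapped by the immersion into prescribed targets, so we may assume each $\Gamma_n$ is a tree with at most $2k$ vertices and $2k-1$ edges. Along a subsequence, the combinatorial graphs $\Gamma_n$ share a common structure $\Gamma$, with edge lengths $l^n_{ij}\to l_{ij}\ge 0$ summing to $l$. Since each $D_j$ is compact, along a further subsequence $P_{j,n}\to P_j\in D_j$. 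For the remaining vertices $V_m\in V(\Gamma)$, the graph distance from $V_{1,n}$ is at most $l$, so their images $\gamma_n(V_m)$ all lie in the closed $l$-neighborhood of $D_1$, which is bounded. A diagonal extraction then yields $\gamma_n(V_m)\to X_m\in\R^d$ for every $m$.

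Next I would construct $\widetilde\Gamma$ by collapsing every edge $e_{ij}$ with $l_{ij}=0$ (identifying the corresponding vertices; by continuity and the bound $l^n_{ij}\ge|\gamma_n(V_i)-\gamma_n(V_j)|$ we have $X_i=X_j$ in that case). Define the immersion $\widetilde\gamma:\widetilde\Gamma\to\R^d$ by $\widetilde\gamma(V_i):=X_i$ on vertices and, on each surviving edge $e_{ij}$ with $l_{ij}>0$, by any injective arc-length parametrized curve joining $X_i$ to $X_j$ (which exists since $l_{ij}\ge|X_i-X_j|$). Setting $\widetilde\V$ to be the image in $\widetilde\Gamma$ of $\{V_{1,n},\dots,V_{k,n}\}$, we have $\widetilde\gamma(\widetilde V_j)=P_j\in D_j$, so $\widetilde\Gamma\in Adm(\widetilde\V;D_1,\dots,D_k)$.

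Finally I would check the energy convergence exactly as in Theorem \ref{th}: along a subsequence the boundary values $w^n(V_i)\to a_i$ of the energy functions on $\Gamma_n$ converge, and Lemma \ref{lip} ensures that $a_i=a_j$ whenever $l_{ij}=0$, so the candidate $\widetilde w$ built from parabolic arcs with boundary data $a_i$ is well defined on $\widetilde\Gamma$. The pointwise convergence of edge-derivatives at endpoints gives the Kirchhoff law at each vertex of $\widetilde\Gamma$, so by Lemma \ref{enfun} $\widetilde w$ is the energy function, and passing to the limit in the explicit integral expression yields $\E(\widetilde\Gamma;\widetilde\V)=\lim_n\E(\Gamma_n;\V_n)$. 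The main subtlety, as in Theorem \ref{th}, is handling the edges whose length collapses to zero; this is precisely where Lemma \ref{lip} is needed to prevent the boundary values from separating in the limit. Compactness of the $D_j$ enters only at the step $P_{j,n}\to P_j\in D_j$, and no new difficulty arises beyond that.
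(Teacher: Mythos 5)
Your proposal is correct and follows essentially the same route as the paper: reduce to trees with at most $2k$ vertices via Theorem \ref{th2}, extract a subsequence so that the combinatorial structure, the edge lengths, and (using compactness of each $D_j$) the images $\gamma_n(V_{j,n})$ converge, and then repeat the limit construction and energy-convergence argument of Theorem \ref{th}. The paper's proof is just a condensed version of this, noting that compactness of the $D_j$ gives $\gamma_n(V_j)\to X_j\in D_j$ and that the rest is verbatim the argument of Theorem \ref{th}.
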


\begin{proof}
As in Theorem \ref{th2}, we can restrict our attention to the connected metric trees $\Gamma$ with the same vertices $V(\Gamma)=\{V_1,\dots,V_N\}$ and edges $E(\Gamma)=\{e_{ij}\}_{ij}$. Moreover, we can suppose that $\V=\{V_1,\dots,V_k\}$ is fixed. By the compactness of the sets $D_j$, we can take a minimizing sequence $\Gamma_n$ and immersions $\gamma_n$ such that for each $j=1,\dots,k$, we have $\gamma_n(V_j)\to X_j\in D_j$, as $n\to\infty$. The claim follows by the same argument as in Theorem \ref{th}.
\end{proof}

Theorem \ref{th} can be restated in the more general framework of the metric spaces of finite Hausdorff measure, which is the natural extension of the class of the one dimensional subspaces of $\R^d$ of finite length. In fact, for any compact connected metric space (shortly CCMS) $(C,d)$, we consider the one dimensional Hausdorff measure $\H^1_d$ with respect to the metric $d$ and the Sobolev space $H^1(C)$ obtained by the closure of the Lipschitz functions on $C$, with respect to the norm $\|u\|_{H^1(C)}^2=\|u\|^2_{L^2(\H^1_d)}+\|u'\|^2_{L^2(\H^1_d)}$, where $u'$ is defined as in the case $C\subset\R^d$. The energy $\E(C;\V)$ with respect to the set $\V\subset C$ is defined as in \eqref{energy1}. As in the case of metric graphs, we define an immersion $\g:C\to\R^d$ as a continuous map such that for any arc-length parametrized curve $\eta:(-\eps,\eps)\to C$, we have that $|(\g\circ\eta)'(t)|=1$ for almost every $t\in(-\eps,\eps)$. As a consequence of Theorem \ref{th}, we have the following:

\begin{cor}
Consider the set of points $\D=\{D_1,\dots,D_k\}\subset\R^d$ and a positive real number $l\ge St(D_1,\dots,D_k)$. Then the following optimization problem has solution:
\be\label{enimC}
\min\left\{\E(C;\V):\ (C,d)\in CCMS,\ \H^1_d(C)\le l,\ C\in Adm(\V;D_1,\dots,D_k)\right\},
\ee 
where the admissible set $Adm(\V;\{D_1,\dots,D_k\})$ is the set of connected metric spaces, for which there exists an immersion $\gamma:\Gamma\to\R^d$ such that $\g(\V)=\{D_1,\dots,D_k\}$. Moreover, the solution of the problem \eqref{enimC} is a connected metric graph, which is a tree of at most $2k$ vertices and $2k-1$ edges. 
\end{cor}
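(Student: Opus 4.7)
The strategy is to reduce the problem on compact connected metric spaces to the existence result already obtained for metric graphs in Theorem \ref{th}. The core claim is: for any admissible $(C,d)\in CCMS$ with $\H^1_d(C)\le l$, one can construct a connected metric graph $\widetilde\Gamma$ with $l(\widetilde\Gamma)\le l$, Dirichlet vertex set $\widetilde{\V}$ and immersion $\widetilde\g:\widetilde\Gamma\to\R^d$ with $\widetilde\g(\widetilde{\V})=\D$, satisfying $\E(\widetilde\Gamma;\widetilde{\V})\le\E(C;\V)$. Granted this reduction, the infimum in \eqref{enimC} equals the infimum of \eqref{enim} (for some admissible length $\le l$), and Theorem \ref{th} provides a metric-tree minimizer with at most $2k$ vertices and $2k-1$ edges, which is itself an admissible CCMS.

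The plan for the reduction is to mimic the proof of Theorem \ref{th1} in this more abstract setting. First, pick $V_1,\dots,V_k\in\V$ with $\g(V_j)=D_j$ and construct a ``geodesic skeleton'' in $(C,d)$: a $d$-geodesic $\eta_1$ from $V_1$ to $V_2$, then inductively for $j=3,\dots,k$ a $d$-geodesic from $V_1$ to $V_j$ truncated at its first contact with $\eta_1\cup\dots\cup\eta_{j-2}$. The union $C'$ of these geodesics has the structure of a metric tree $\Gamma'$ with at most $2k-2$ vertices and $2k-3$ edges, and $\H^1_d(C')\le l$.

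Second, decompose the remainder. Since $(C,d)$ has finite $\H^1_d$ measure, the Second Rectifiability Theorem (which carries over verbatim to metric spaces, see \cite[Section 4.4]{at}) yields $C=C'\cup E\cup \Gamma$ with $\H^1_d(E)=0$ and $\Gamma=\bigcup_j\g_j([0,l_j])$ a countable union of injective arc-length parametrized Lipschitz curves; by restricting each $\g_j$ to the connected components of $\g_j^{-1}(C\setminus C')$, we may assume $\H^1_d(\Gamma\cap C')=0$. Let $w\in H^1_0(C;\V)$ be the energy function on $C$, let $x'\in C'$ be a point where $w\vert_{C'}$ attains its maximum, and construct $\widetilde\Gamma$ by attaching to $\Gamma'$ (adding $x'$ as a vertex if needed) a single pendant edge of length $\H^1_d(\Gamma)$; define $\widetilde w$ to equal $w$ on $\Gamma'$ and to equal $v(t)+w(x')$ on the new edge, where $v:[0,\H^1_d(\Gamma)]\to\R$ is the monotone rearrangement with the same distribution as $w\vert_\Gamma$. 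Exactly the co-area/symmetrization computation of Remark \ref{ps} (or \eqref{e1t1}) yields $J(\widetilde w)\le J(w)$, hence $\E(\widetilde\Gamma;\widetilde{\V})\le\E(C;\V)$. An admissible immersion of $\widetilde\Gamma$ is inherited from the restriction of $\gamma$ to $C'$, extending the pendant edge as any arc-length parametrized curve in $\R^d$ starting at $\gamma(x')$ (this is where allowing an \emph{immersion} rather than an embedding is essential). Finally, since adding length to a pendant Neumann edge can only decrease $\E$, we may adjust $l(\widetilde\Gamma)$ up to $l$ without loss.

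The main obstacle is Step~2: guaranteeing that a CCMS of finite $\H^1_d$-measure admits the same rectifiable decomposition available in $\R^d$, and that the co-area/symmetrization argument of Remark \ref{ps} goes through in this intrinsic setting. Both facts are known (Ambrosio--Tilli \cite[Chapter 4]{at} develop rectifiability in general metric spaces, and the chain rule Lemma \ref{lchain} together with the co-area formula of Remark \ref{coarea} are intrinsic), so once the decomposition is justified, the argument proceeds exactly as in the Euclidean case, and Theorem \ref{th} concludes the proof.
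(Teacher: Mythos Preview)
Your proposal is correct and follows essentially the same approach as the paper, whose proof simply says that by repeating the construction from Theorem~\ref{th1} one reduces to the class of metric graphs and then invokes Theorem~\ref{th}. You have supplied the details the paper leaves implicit, in particular the intrinsic rectifiability decomposition and co-area/symmetrization needed to run the Theorem~\ref{th1} argument in a general CCMS, together with the observation that extending the pendant Neumann edge lets one saturate the length constraint.
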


\begin{proof}
Repeating the construction from Theorem \ref{th1}, we can restrict our attention to the class of metric graphs. The thesis follows from Theorem \ref{th}.
\end{proof}

The results from Theorem \ref{th1} and Theorem \ref{th}, hold also for other cost functionals as, for example, the first eigenvalue of the Dirichlet Laplacian:
\begin{equation}
\lambda_1(\Gamma;\mathcal V)=\min\left\{\int_\Gamma|u'|^2\,dx:\ u\in H^1_0(\Gamma),\ \int_\Gamma u^2\,dx=1\right\},
\end{equation} 
where $\Gamma$ is a metric graph and $\mathcal{V}\subset V(\Gamma)$ is a set of vertices, where a Dirichlet boundary conditions are imposed. Reasoning as in Remark \ref{ps}, we have that among all connected metric graphs (shortly, CMG) of fixed length $l$ and  with at least one Dirichlet vertex, the one with the lowest first eigenvalue is given by the segment $[0,l]$, with Dirichlet condition in $0$. Moreover, for any pair $D_1,D_2\in\R^d$ and any $l\ge |D_1-D_2|=:l-\epsilon$ the solution of
\begin{equation}\label{lb1}
\min\left\{\lambda_1(\Gamma;\mathcal V):\ \Gamma\in CMG, l(\Gamma)=l, \mathcal V\subset V(\Gamma),\exists \gamma:\Gamma\to\R^d\ \hbox{immersion},\ \gamma(\mathcal V)=\mathcal D\right\},
\end{equation}
is the graph described in Figure \ref{f1ex2}, i.e. the solution of \eqref{enim2} from Example \ref{example2}. In the case when the set $\mathcal{D}$ is given by three points disposed in the vertices of an equilateral triangle, the solutions of \eqref{lb1} are quantitatively the same (see Figure \ref{f2ex3}) as the solutions of \eqref{enim3} from Example \ref{example3}. In general, we have the following existence result

\begin{teo}
Consider a set of distinct points $\D=\{D_1,\dots,D_k\}\subset\R^d$ and a positive real number $l\ge St(\D)$. Then there exists a connected metric graph $\Gamma$, a set of vertices $\V\subset V(\Gamma)$ and an immersion $\gamma:\Gamma\to\R^d$ which are solution of the problem \eqref{lb1}. Moreover, $\Gamma$ can be chosen to be a tree of at most $2k$ vertices and $2k-1$ edges.
\end{teo}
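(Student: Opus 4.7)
The plan is to parallel the three-step strategy used for the Dirichlet energy (Theorems \ref{th1}, \ref{th2}, \ref{th}), adapted to the fact that $\lambda_1$ is a Rayleigh quotient rather than a minimum energy. The two conceptual ingredients are: (i) a structural reduction to trees with at most $2k$ vertices and $2k-1$ edges, and (ii) a compactness/lower semicontinuity argument on a minimizing sequence of such trees.

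First I would establish the analog of Theorem \ref{th2}: given an admissible $(\Gamma,\V,\g)$, produce a tree $(\widetilde\Gamma,\widetilde\V,\widetilde\g)$ with at most $2k$ vertices and $2k-1$ edges and $\lambda_1(\widetilde\Gamma;\widetilde\V)\le\lambda_1(\Gamma;\V)$. Select a skeleton $\Gamma'\subset\Gamma$ consisting of simple geodesic paths joining the Dirichlet vertices (exactly as in Theorem \ref{th2}) and denote by $E''$ the remaining edges of total length $l''$. If $u$ is a first eigenfunction with $\|u\|_{L^2}=1$, replace the pieces of $u$ living on $E''$ by their equimeasurable monotone rearrangement $v$ on a single interval $[0,l'']$ attached at the point of $\Gamma'$ where $u|_{\Gamma'}$ achieves its maximum. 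By equimeasurability $\|v\|_{L^2}^2=\sum_{e_{ij}\in E''}\|u_{ij}\|_{L^2}^2$, while the co-area formula \eqref{coa2} and Cauchy--Schwarz (as in \eqref{e1r3}) give $\int_0^{l''}|v'|^2\le\sum_{e_{ij}\in E''}\int|u'_{ij}|^2$. Hence the Rayleigh quotient of the rearranged test function on $\widetilde\Gamma$ is not larger than $\lambda_1(\Gamma;\V)$.

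Second, I would take a minimizing sequence $(\Gamma_n,\V_n,\g_n)$ of such reduced trees. Since the number of combinatorial trees on $\le 2k$ vertices is finite, one may assume all $\Gamma_n$ share a common combinatorial graph $\Gamma$, with edge lengths $l^n_{ij}\to l_{ij}\in[0,l]$ and vertex images $\g_n(V_i)\to X_i\in\R^d$ (the $X_j$ for $V_j\in\V$ are exactly the $D_j$). Collapse every edge with $l_{ij}=0$ to identify its endpoints; the resulting graph $\widetilde\Gamma$ is a connected metric tree of total length $l$ and admits an immersion $\widetilde\g$ with $\widetilde\g(\widetilde\V)=\D$, constructed as in the proof of Theorem \ref{th} (using $l_{ij}\ge|X_i-X_j|$ to draw arc-length curves between the $X_i$'s).

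The remaining step, which I expect to be the main obstacle, is to prove $\lambda_1(\widetilde\Gamma;\widetilde\V)\le\liminf_n\lambda_1(\Gamma_n;\V_n)$. Let $u_n$ be a first eigenfunction on $\Gamma_n$ with $\|u_n\|_{L^2}=1$; then $\|u_n'\|_{L^2}^2=\lambda_n$ is bounded, and since $u_n$ vanishes at a Dirichlet vertex, the Poincaré inequality \eqref{pncr3} gives a uniform $L^\infty$ bound, and the $1/2$-Hölder estimate from Remark \ref{cpt} adapted to graphs gives $|u_n(V_i)-u_n(V_j)|\le (l^n_{ij})^{1/2}\lambda_n^{1/2}$. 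On each edge with $l_{ij}>0$, Ascoli--Arzelà (after reparametrizing $[0,l^n_{ij}]$ onto $[0,l_{ij}]$) yields uniform convergence of $u_n|_{e_{ij}}$ to some $u_{ij}$, with weak $H^1$ convergence of derivatives. On each edge with $l_{ij}=0$ the $L^2$ mass $\int_0^{l^n_{ij}}|u_{n,ij}|^2\le\|u_n\|_\infty^2\, l^n_{ij}\to0$, and by the Hölder estimate above the two endpoint values merge to the same limit, so the collection $(u_{ij})$ defines a continuous function $u\in H^1_0(\widetilde\Gamma;\widetilde\V)$ with $\|u\|_{L^2(\widetilde\Gamma)}=1$. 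Lower semicontinuity of each $\int_0^{l_{ij}}|u'_{ij}|^2$ under weak $H^1$ convergence, together with the non-negativity of the contributions on collapsing edges, yields $\int_{\widetilde\Gamma}|u'|^2\le\liminf_n\lambda_n$; testing $u$ in the Rayleigh quotient then gives $\lambda_1(\widetilde\Gamma;\widetilde\V)\le\liminf_n\lambda_1(\Gamma_n;\V_n)$, which completes the proof.
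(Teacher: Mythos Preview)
Your overall strategy matches the paper's: the paper's proof consists of the single sentence ``The proof is identical to the one of Theorem~\ref{th}'', so the intended argument is the reduction of Theorem~\ref{th2} followed by the compactness argument of Theorem~\ref{th}, which is exactly the two-step plan you outline.

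There is one small gap in your reduction step. When you attach the equimeasurable rearrangement $v$ of $u|_{E''}$ to $\Gamma'$ at the point $x'$ where $u|_{\Gamma'}$ is maximal, continuity of the test function requires $v(0)=u(x')$, which the rearrangement does not guarantee. The fix is the one already implicit in Theorems~\ref{th1} and~\ref{th2}: shift $v$ up by $u(x')-v(0)$. This shift is nonnegative because each connected component of $E''$ meets $\Gamma'$ at some vertex, where $u$ takes a value at most $u(x')$ and at least $\min_{E''}u=v(0)$. The shift leaves $\int|v'|^2$ unchanged and, since $u\ge0$, only increases $\int v^2$, so the Rayleigh quotient of the resulting test function is still at most $\lambda_1(\Gamma;\V)$. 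With this correction your reduction is complete.

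Your compactness step is correct but takes a genuinely different route from the one implicit in ``identical to Theorem~\ref{th}''. The proof of Theorem~\ref{th} exploits that the energy function is explicitly a parabola on each edge: one passes to the limit in the vertex values using the uniform Lipschitz bound of Lemma~\ref{lip}, defines the limit function by the explicit formula, and verifies Kirchhoff's law directly to conclude that it is the energy function on $\widetilde\Gamma$. The literal analog for $\lambda_1$ would use that each $u_{n,ij}$ is a sinusoid with frequency $\sqrt{\lambda_n}$, together with an eigenfunction version of Lemma~\ref{lip}. You bypass all of this: the uniform $H^1$ bound, the $1/2$-H\"older estimate for vertex coalescence on collapsing edges, Ascoli--Arzel\`a on surviving edges, and weak lower semicontinuity of the Dirichlet integral suffice to produce a limit test function with the right $L^2$ norm. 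Your argument is more robust (it uses nothing about the specific equation $-u''=\lambda u$) and would in fact give an alternative proof of Theorem~\ref{th} itself; the paper's route has the minor advantage of identifying the limit as the actual minimizer on $\widetilde\Gamma$, whereas yours only produces a competitor, which is all that is needed.
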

\begin{proof}
The proof is identical to the one of Theorem \ref{th}.
\end{proof}

\begin{oss}
The question of existence of an optimal graph is open for general cost functionals $J$ spectral type, i.e. $J=F(\lambda_1,\lambda_2,\dots,\lambda_k,\dots)$, where $F:\R^\N\to\R$ is a real function and $\lambda_k$ is the $k$-th eigenvalue of the Dirichlet Laplacian:
\begin{equation}
\lambda_k(\Gamma;\mathcal V)=\min_{K\subset H^1_0(\Gamma)}\max\left\{\int_\Gamma|u'|^2\,dx:\ u\in K,\ \int_\Gamma u^2\,dx=1\right\},
\end{equation} 
where the minimum is over all $k$ dimensional subspaces $K$ of $H^1_0(\Gamma)$. In fact, the crucial point in the proof of Theorem \ref{th} is the reduction to the class of connected metric trees with number of vertices bounded by some universal constant. This reduction becomes a rather involved question even for the simplest spectral functionals $\lambda_k$ for $k\ge2$.

A similar difficulty occurs for other kinds of shape optimization problems for graphs, like for instance the optimization of integral functionals
$$J(C)=\int_C j(x,w_C)\,d\H^1$$
being $w_C$ the solution of
$$\min\left\{\int_C\Big(\frac12|u'|^2-fu\Big)\,d\H^1\ :\ u\in H^1_0(C,\D)\right\},$$
where $f$ is a continuous function on $\R^d$.
\end{oss}

\begin{ack}
The authors would like to thank Dorin Bucur for some useful suggestions during the preparation of the work. They are also grateful to Mihail Minchev for the discussions on the metric graphs and explaining them the physical point of view on the topic.   
\end{ack}


\bigskip\noindent
Giuseppe Buttazzo:
Dipartimento di Matematica,
Universit\`a di Pisa\\
Largo B. Pontecorvo 5,
56127 Pisa - ITALY\\
{\tt buttazzo@dm.unipi.it}\\
{\tt http://www.dm.unipi.it/pages/buttazzo/}

\bigskip\noindent
Berardo Ruffini:
Scuola Normale Superiore di Pisa,\\
Piazza dei Cavalieri 7, 
56126 Pisa -ITALY\\
{\tt berardo.ruffini@sns.it}

\bigskip\noindent
Bozhidar Velichkov:
Scuola Normale Superiore di Pisa\\
Piazza dei Cavalieri 7, 
56126 Pisa - ITALY\\
{\tt b.velichkov@sns.it}

\end{document}